\documentclass[reqno]{siamltex704}
\usepackage{amsmath}
\usepackage{amssymb}
\usepackage{graphicx}
\usepackage{subfigure}
\newtheorem{remark}{Remark}[section]

\usepackage{color}

\newcommand{\vq}{{\bf q}}

\newcommand{\bx}{{\bf x}}
\newcommand{\vx}{{\bf x}}

\def\T{{\mathcal T}}
\def\E{{\mathcal E}}

\def\bn{{\bf n}}

\def\3bar{{|\hspace{-.02in}|\hspace{-.02in}|}}

\newcommand{\cV}{\mathcal{V}}
\newcommand{\cA}{\mathcal{A}}
\newcommand{\cB}{\mathcal{B}}

\newcommand{\ldp}{{(\hspace{-.03in}(}}
\newcommand{\rdp}{{)\hspace{-.03in})}}

\title{An auxiliary space multigrid preconditioner for the weak Galerkin method}
\author{Long Chen\thanks{Department of
Mathematics, University of California, Irvine -Irvine, CA 92697-3875 (chenlong@math.uci.edu). The research of Chen has been supported by NSF Grant DMS-1418934.}
\and Junping Wang\thanks{Division of Mathematical Sciences, National
Science Foundation, Arlington, VA 22230 (jwang@\break nsf.gov). The
research of Wang was supported by the NSF IR/D program, while
working at the Foundation. However, any opinion, finding, and
conclusions or recommendations expressed in this material are those
of the author and do not necessarily reflect the views of the
National Science Foundation.}
\and Yanqiu Wang\thanks{Department of Mathematics, Oklahoma State University,
Stillwater, OK 74075 (yqwang@math.okstate.edu).} \and Xiu
Ye\thanks{Department of Mathematics, University of Arkansas at
Little Rock, Little Rock, AR 72204 (xxye@ualr.edu).  The research of Ye was supported in part by National Science Foundation Grant DMS-1115097.}}

\begin{document}
\maketitle
\begin{abstract}
 In this paper, we construct an auxiliary space multigrid preconditioner for the weak Galerkin method
for second-order diffusion equations, discretized on simplicial 2D or 3D meshes.
The idea of the auxiliary space multigrid preconditioner is to use an auxiliary space as a ``coarse'' space in the multigrid algorithm,
where the discrete problem in the auxiliary space can be easily solved by an existing solver.
In our construction, we conveniently use the $H^1$ conforming piecewise linear finite element space as an auxiliary space.
The main technical difficulty is to build the connection between the weak Galerkin discrete space and the
$H^1$ conforming piecewise linear finite element space.
We successfully constructed such an auxiliary space multigrid preconditioner for the weak Galerkin method,
as well as a reduced system of the weak Galerkin method involving only the degrees of freedom on edges/faces.
The preconditioned systems are proved to have condition numbers independent of the mesh size.
Numerical experiments further support the theoretical results.
\end{abstract}

\begin{keywords}
Weak Galerkin finite element methods,  multigrid, preconditioner.
\end{keywords}

\begin{AMS}
Primary, 65N15, 65N30.
\end{AMS}

\section{Introduction}
Consider the second-order diffusion equation
\begin{equation} \label{eq:pde}
  \begin{aligned}
    -\nabla\cdot (\mathbb{A}\nabla u) &= f \qquad \textrm{in }\Omega,\\
    u &= 0        \qquad \textrm{on } \partial\Omega.
  \end{aligned}
\end{equation}
where $\Omega$ is a polygonal or polyhedral domain in
$\mathbb{R}^d\; (d=2,3)$. Assume that $\mathbb{A}$ is a symmetric,
uniformly positive definite, and uniformly bounded-above diffusion
matrix. Namely, there exist positive constants $\alpha$ and $\beta$
such that
\begin{equation}\label{matrix}
\alpha\xi^T\xi\leq \xi^T \mathbb A(x)\xi\le \, \beta\xi^T\xi \quad \text{for all }
\xi\in \mathbb{R}^d\;\textrm{and}\; x\in\Omega.
\end{equation}
The goal of this paper is to construct and analyze an auxiliary space multigrid preconditioner for the weak Galerkin
finite element discretization of Problem \eqref{eq:pde}.

The weak Galerkin method was recently introduced in
\cite{WangYe_PrepSINUM_2011} for second order elliptic equations. It
is an extension of the standard Galerkin finite element method where
classical derivatives were substituted by weakly defined derivatives
on functions with discontinuity. Optimal order of {\em a priori} error
estimates has been observed and established for various weak
Galerkin discretization schemes for second order elliptic equations
\cite{mwy-wg-stabilization, WangYe_PrepSINUM_2011, wy-mixed}.
An {\em a posteriori} error estimator was given in~\cite{Chen2013}.
Numerical implementations of weak Galerkin were discussed in
\cite{mwy-wg-stabilization, MuWangWangYe} for some model problems.

The weak Galerkin method has already demonstrated many nice
properties in various cases~\cite{WG-biharmonic, mwy-wg-stabilization, WangYe_PrepSINUM_2011, wy-mixed}.
Thus we are motivated to study fast solvers and preconditioning techniques for the weak Galerkin method.

The main results of this paper are:
\begin{itemize}
\item We develop a fast auxiliary space preconditioner for weak Galerkin methods using Raviart-Thomas element and Brezzi-Douglas-Marini element on triangular grids.
\item We consider both the original system and the reduced system. The original weak Galerkin discretization of \eqref{eq:pde} involves degrees of freedom
both on the interior of each mesh element and on mesh edges/faces.
The reduced system, which only involves degrees of freedom on edges/faces, is to our knowledge first rigorously constructed and analyzed here.
\end{itemize}

Recently, Li and Xie announced an auxiliary space mulrigrid preconditioning method for the weak Galerkin finite element method, and the result was posted on ArXive~\cite{LiXie}.  This result became to be known to us after the bulk portion of the present paper was developed. Following a through comparison, we conclude that our results are more general, and the two approaches are different in analysis. In addition, our result offers a new feature by covering the weak Galerkin method in the reduced system.

We shall briefly introduce the auxiliary space preconditioner constructed in~\cite{xu}.
A classical geometric multigrid method constructs discrete spaces on different mesh levels using the same type of
discretization. For example, in the classical multigrid method for $H^1$ conforming piecewise linear ($P_1$) finite element
approximation, one uses a set of nested meshes with characteristic mesh sizes $h$, $2h$, $4h$, $\ldots$, from the
finest mesh to the coarsest. An illustration of V-cycle multigrid is given in Figure \ref{fig:multigrid}.
The auxiliary space multigrid method can be essentially understood as a two-level method
involving a ``fine'' level and a ``coarse'' level, while the ``fine'' space and ``coarse'' space
are not necessarily using the same type of discretization or the same type of meshes.
This gives great freedom in choosing the ``coarse'' space, which is also called
an auxiliary space. Here we use the weak Galerkin discretization for the ``fine'' level,
and the $H^1$ conforming piecewise linear finite element discretization for the ``coarse'' level.
Both the ``fine'' level and the ``coarse'' level are discretized on the same mesh, as shown in Figure \ref{fig:multigrid}.
In the figure, we conveniently use black rectangles and black dots to denote different type of discretization spaces on different levels.
Because the fast solvers for the $H^1$ conforming piecewise linear finite element discretization
have been thoroughly studied, one can use any existing solvers/preconditioners as a ``coarse'' solver.
For example, one may use a classical multigrid method as a ``coarse'' solver and consequently achieves
a true ``multi''-grid effect (see Figure \ref{fig:multigrid}).

\begin{figure}
  \begin{center}
  \caption{Illustration of auxiliary space multigrid. We use black rectangles and black dots to denote different type of discretization spaces.
  The dashed circles shows how to derive an  auxiliary space ``multi''-grid method by using a classical multigrid as a coarse solver in the two-grid
  auxiliary space multigrid framework.} \label{fig:multigrid}
  \includegraphics[width=10cm]{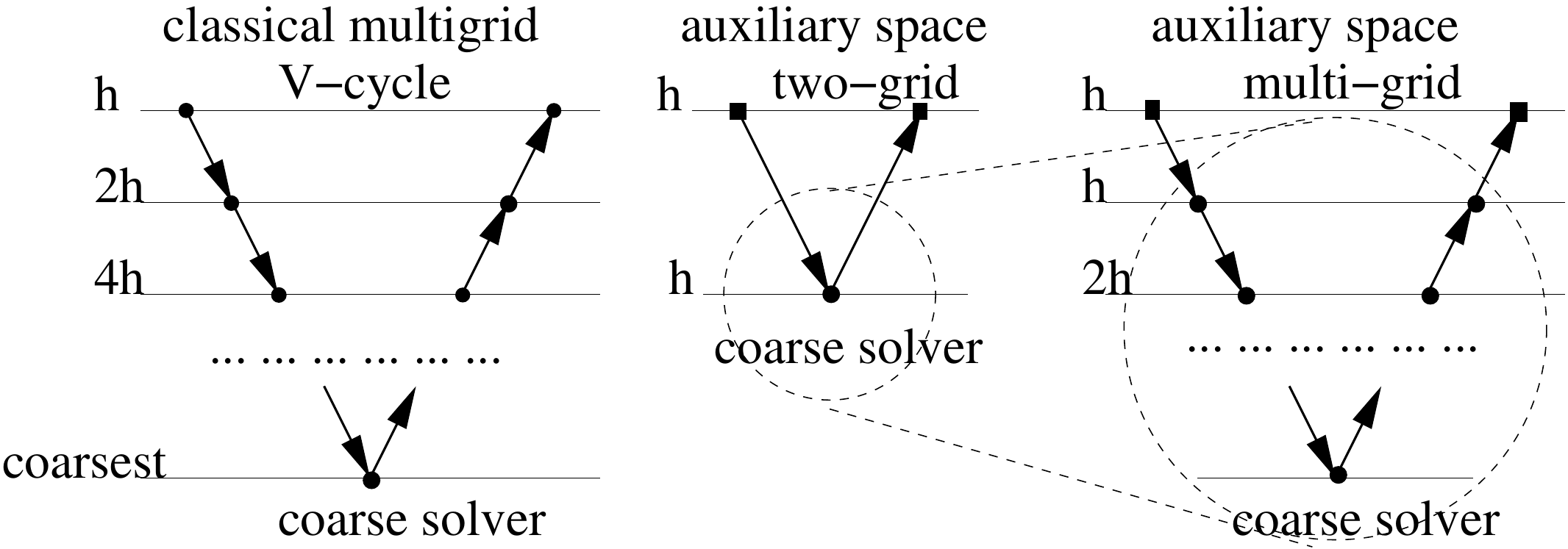}
  \end{center}
\end{figure}

The rest of the paper is organized as following. In Section
\ref{sec:weakGalerkin}, we give a brief introduction of the weak Galerkin
method, and in Section
\ref{sec:multigrid}, we construct the auxiliary space multigrid
preconditioner for the weak Galerkin discretization, and prove that
the condition number of the preconditioned system does not depend on
the mesh size. After that, we consider a reduced system of the weak Galerkin
discretization in Section \ref{sec:reduced} and construct an
auxiliary space multigrid solver/preconditioner
the reduced system, again with an optimal condition number estimate.
Finally in Section \ref{sec:numerical}, we present supporting numerical results.

\section{A Weak Galerkin Finite Element Scheme} \label{sec:weakGalerkin}
In this section, we give a brief introduction to the weak Galerkin method.
Related notation, definitions, and several important inequalities will be stated.

Let $D\subseteq\Omega$ be a polygon or polyhedron, we use the standard definition
of Sobolev spaces $H^s(D)$ and $H_0^s(D)$ with $s\ge 0$ (e.g., see
\cite{adams, ciarlet} for details). The associated inner product,
norm, and semi-norms in $H^s(D)$ are denoted by
$(\cdot,\cdot)_{s,D}$, $\|\cdot\|_{s,D}$, and $|\cdot|_{r,D}, 0\le r
\le s$, respectively. When $s=0$, $H^0(D)$ coincides with the space
of square integrable functions $L^2(D)$. In this case, the subscript
$s$ is suppressed from the notation of norm, semi-norm, and inner
products. Furthermore, the subscript $D$ is also suppressed when
$D=\Omega$. For $s<0$, the space $H^s(D)$ is defined to be the dual
of $H_0^{|s|}(D)$.

The above definition/notation can easily be extended to
vector-valued and matrix-valued functions. The norm, semi-norms, and
inner-product for such functions shall follow the same naming
convention. In addition, all these definitions can be transferred
from a polygonal/polyhedral domain $D$ to an edge/face $e$, a domain with lower
dimension. Similar notation system will be employed. For example,
$\|\cdot\|_{s,e}$ and $\|\cdot\|_e$ would denote the norm in
$H^s(e)$ and $L^2(e)$ etc. We also define the $H({\rm div})$ space as
follows
$$
H({\rm div},\Omega) = \{\vq:\ \vq \in [L^2(\Omega)]^d,\: \nabla\cdot\vq\in
L^2(\Omega)\}.
$$

Using the notation defined above, the variational form of Equation \eqref{eq:pde}
can be written as: Given $f\in L^2(\Omega)$, find $u\in H_0^1(\Omega)$ such that
\begin{equation} \label{eq:weakformulation}
(\mathbb A \nabla u, \, \nabla v) = (f, v)\qquad \textrm{for all } v\in H_0^1(\Omega).
\end{equation}

It is well known that equation \eqref{eq:weakformulation} admits a unique solution.
In addition, we assume that the solution to \eqref{eq:weakformulation} has $H^{1+s}$ regularity~\cite{Brenner, Grisvard85}, where $0<s\le 1$.
In other words, the solution $u$ is in $H^{1+s}(\Omega)$ and there exists a constant $C$ independent of $u$ such that
\begin{equation}
  \|u\|_{1+s} \le C\|f\|_0.
\end{equation}

Next, we present the weak Galerkin method for solving \eqref{eq:weakformulation}.
Let ${\cal T}_h$ be a
shape-regular, quasi-uniform triangular/tetrahedral mesh on the domain $\Omega$, with characteristic mesh size $h$.
For each triangle/tetrahedron $K\in {\cal T}_h$, denote by $K_0$ and $\partial K$ the interior and the
boundary of $K$, respectively. Geometrically, $K_0$ is identical to $K$.
Therefore, later in the paper, we often identify these two if it causes no ambiguity.
The boundary $\partial K$ consists of three edges in two-dimension, or four triangles in three-dimension.
Denote by $\E_h$ the collection of all edges/faces in ${\cal T}_h$. For
simplicity, throughout the paper, we use ``$\lesssim$''
to denote ``less than or equal to up to a general constant
independent of the mesh size or functions appearing in the
inequality''.

Let $j$ be a non-negative integer. On each $K\in {\cal T}_h$, denote
by $P_j(K_0)$ the set of polynomials with degree less than or equal
to $j$. Likewise, on each $e\in \E_h$, $P_j(e)$ is the set of
polynomials of degree no more than $j$. Following
\cite{WangYe_PrepSINUM_2011}, we define a weak discrete space on
mesh $\T_h$ by
$$
\begin{aligned}
V_{h} =  \{v:\:  v|_{K_0}\in P_j(K_0)\textrm{ for } K\in \T_h;
 \ v|_e\in P_l(e)\textrm{ for } e\in \E_h, &\\
 \textrm{and } v|_e=0 \textrm{ for } e\in \E_h\cap\partial\Omega\}, \text{ where }l = {j} \text{ or } j+1. &
\end{aligned}
$$
Observe that the definition of $V_h$ does not require any continuity
of $v\in V_h$ across interior edges/faces. A function in $V_h$ is
characterized by its value on the interior of each mesh element plus its
value on edges/faces. Therefore, it is convenient to represent
functions in $V_h$ with two components, $v=\{v_0, v_b\}$, where
$v_0$ denotes the value of $v$ on all $K_0$ and $v_b$ denotes the
value of $v$ on $\E_h$. The polynomial space $P_l(e)$ consists of two choices: $l= j$ or $j+1$ and the corresponding weak function space will sometimes be abbreviated as $W_{j,j}$ or $W_{j,j+1}$, respectively.

The weak Galerkin method seeks an approximation $u_h\in V_h$ to the
solution of problem \eqref{eq:weakformulation}. To this end, we
first introduce a discrete gradient operator, which is defined
element-wisely on each $K\in \T_h$. For the choices of $V_h$ given
above, i.e., using $W_{j,j}$ or $W_{j,j+1}$, suitable definitions of
the weak gradient involve the Raviart-Thomas (RT) element and the
Brezzi-Douglas-Marini (BDM) element, respectively. Let $K$ be either
a triangle or a tetrahedron and denote by $\widehat P_k(K)$ the set
of homogeneous polynomials of order $k$ in the variable
$\bx=(x_1,\ldots, x_d)^T$. Define the BDM element by
$G_j(K)=\left[P_{j+1}(K)\right]^d$ and the RT element by $G_{j}(K) =
\left[P_j(K)\right]^d + \widehat P_j(K) \bx$ for $j\geq 0$. Then, define a
discrete space
%
$$
\Sigma_h = \{\vq\in (L^2(\Omega))^d:\: \vq|_K \in G_j(K)\textrm{ for } K\in
\T_h\}.
$$
Here in the definition of $V_h$ and $\Sigma_h$, the RT element is
paired with $W_{j,j}$ while the BDM element is paired with
$W_{j,j+1}$. Note that $\Sigma_h$ is not necessarily a subspace of
$H({\rm div},\Omega)$, since it does not require any continuity in
the normal direction across mesh edges/faces.
\begin{definition}[Discrete Weak Gradient]
The discrete weak gradient of $v_h$ denoted by
$\nabla_{w}v_h$ is defined as the unique polynomial
$(\nabla_{w}v_h)|_K \in G_j(K)$ satisfying the following
equation
\begin{equation}\label{dwg}
(\nabla_{w}v_h, \vq)_K = -(v_0,\nabla\cdot \vq)_K+ \langle v_b,
\vq\cdot\bn\rangle_{\partial K}\quad \text{for all } \vq\in G_j(K),
\end{equation}
where $\bn$ is the unit outward normal on $\partial K$.
\end{definition}

Clearly, such a discrete weak gradient is always well-defined.
Furthermore, if $v\in H^1(K)$, i.e. $v_b = v_0|_{\partial K}$, and $\nabla v\in G_j(K)$, then one has $\nabla
_{w}v = \nabla v.$
%
In this paper we only consider the $W_{j,j}-RT$ and $W_{j,j+1}-BDM$
pairs on simplicial elements in the discretization. But there are many other different choices of discrete
spaces in the weak Galerkin method, defined on either simplicial
meshes or other types of meshes including general polytopal meshes~\cite{mwy-wg-stabilization}.
Extension of the multigrid preconditioner to other
weak Galerkin discretizations will be considered in the future work.

We define an $L^2$ projection from $H_0^1(\Omega)$ onto
$V_h$ by setting $Q_h v \equiv \{Q_0 v,\, Q_b v\}$, where $Q_0
v|_{K_0}$ is the local $L^2$ projection of $v$ to $P_j(K_0)$, for
$K\in\T_h$, and $Q_b v|_e$ is the local $L^2$ projection to
$P_l(e)$, for $e\in \E_h$. We also introduce $\mathbb Q_h$ the $L^2$ projection onto $\Sigma_h$. It is not hard to see the following operator identity~\cite{WangYe_PrepSINUM_2011}:
\begin{equation}\label{eq:QDDQ}
\mathbb Q_h \nabla u = \nabla _wQ_h u, \quad \text{ for all } u\in H_0^1(\Omega)
\end{equation}

For the $W_{j,j}-RT$ and $W_{j,j+1}-BDM$ pairs, it follows directly from \eqref{eq:QDDQ} that the discrete weak
gradient is a good approximation to the classical gradient, as
summarized in the following lemma~\cite{WangYe_PrepSINUM_2011}:

\medskip
\begin{lemma} \label{lem:assumptions}
  For any $v_h=\{v_0,\,v_b\}\in V_h$ and $K\in\T_h$, $\nabla_w v_h|_K = 0$ if and only if $v_0=v_b = constant$ on $K$.
Furthermore, for any $v\in H^{m+1}(\Omega)$, where $0\le m\le j+1$, we have
    $$\|\nabla_w (Q_h v)-\nabla v\|\lesssim h^{m}\|v\|_{m+1}.$$
In particular, for $v\in H^1(\Omega)$, the $L^2$-projection $Q_h$ is energy stable, i.e,
\begin{equation}\label{eq:H1stable}
\|\nabla _w (Q_h v)\|\lesssim \|\nabla v\| \quad \text{ for } v\in H^1(\Omega).
\end{equation}
\end{lemma}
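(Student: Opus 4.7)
The plan is to handle the three claims in turn, with the approximation and stability bounds following quickly from the commuting diagram identity \eqref{eq:QDDQ}.

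For the kernel characterization, the ``if'' direction is immediate: substituting $v_0 = v_b = c$ into the definition \eqref{dwg} and using the divergence theorem yields $(\nabla_w v_h, \bq)_K = 0$ for every $\bq \in G_j(K)$. For the ``only if'' direction, I would integrate by parts inside \eqref{dwg} to rewrite the hypothesis as
$$(\nabla v_0, \bq)_K + \langle v_b - v_0, \bq \cdot \bn \rangle_{\partial K} = 0 \quad \text{for all } \bq \in G_j(K).$$
I would then split the argument using the natural boundary/interior splitting of degrees of freedom for the RT and BDM elements. First, restricting to test functions $\bq$ with vanishing normal trace on $\partial K$ and exploiting unisolvence of the interior degrees of freedom (which, for both $RT_j$ and $BDM_j$, consist of $L^2$ moments against $[P_{j-1}(K)]^d$), I would conclude that the $L^2$ inner product of $\nabla v_0$ against every element of $[P_{j-1}(K)]^d$ vanishes. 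Since $\nabla v_0$ itself lies in $[P_{j-1}(K)]^d$, this forces $\nabla v_0 = 0$, so $v_0 \equiv c$. Second, with $v_0$ now constant, the surviving boundary pairing $\langle v_b - c, \bq \cdot \bn \rangle_{\partial K} = 0$, together with the fact that the normal traces of $G_j(K)$ sweep out $P_j(e)$ in the RT case and $P_{j+1}(e)$ in the BDM case independently on each face $e$, forces $v_b = c$ on every face.

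For the approximation estimate and the stability bound I would invoke \eqref{eq:QDDQ}, which gives
$$\nabla_w Q_h v - \nabla v = \mathbb Q_h \nabla v - \nabla v, \qquad \|\nabla_w Q_h v\| = \|\mathbb Q_h \nabla v\|.$$
The stability \eqref{eq:H1stable} then follows from the fact that $\mathbb Q_h$ is an $L^2$-orthogonal projection and hence a contraction. For the approximation bound, since $[P_j(K)]^d \subset G_j(K)$ for both the RT and BDM pairs, an elementwise Bramble--Hilbert argument gives $\|\mathbb Q_h \nabla v - \nabla v\|_K \lesssim h^m |v|_{m+1,K}$ for $0 \le m \le j+1$, and summing over $K \in \T_h$ completes the proof.

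The main conceptual obstacle lies only in the kernel characterization: one must invoke the standard RT/BDM degree-of-freedom structure at just the right granularity, using both the $L^2$-duality between the interior moments and $[P_{j-1}(K)]^d$ and the surjectivity of the normal-trace map onto the appropriate polynomial space on each face. Once \eqref{eq:QDDQ} is in hand, the remaining two assertions reduce immediately to standard approximation and stability theory for the $L^2$ projection onto piecewise polynomial spaces.
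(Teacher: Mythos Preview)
Your argument is correct and matches the paper's treatment. The paper does not give a self-contained proof of this lemma; it simply cites \cite{WangYe_PrepSINUM_2011} and remarks that the approximation estimate follows directly from the commuting identity \eqref{eq:QDDQ}. Your handling of the approximation bound and the energy stability via $\nabla_w Q_h v = \mathbb Q_h \nabla v$ is precisely this route, so there is nothing to compare on those two points.

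For the kernel characterization, where the paper offers no argument at all, your proof is sound. One small imprecision: for the BDM pair the paper takes $G_j(K)=[P_{j+1}(K)]^d$, and the standard BDM interior degrees of freedom are not literally moments against all of $[P_{j-1}(K)]^d$ (they are moments against $\nabla P_{j}(K)$ together with certain curl-type moments). This does not damage your conclusion, however, for two reasons either of which suffices: (i) since $\nabla v_0\in \nabla P_j(K)$ is itself a gradient, it is tested directly by the first family of BDM interior moments, so unisolvence still forces $\nabla v_0=0$; or (ii) $RT_j(K)\subset [P_{j+1}(K)]^d$, so the BDM bubble space contains the RT bubble space, and you have already shown the latter pairs non-degenerately with $[P_{j-1}(K)]^d$. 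With that clarification the kernel argument is complete.
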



Now we are able to present the weak Galerkin finite element formulation for \eqref{eq:weakformulation}:
Find $u_h = \{u_0,\, u_b\}\in V_h$ such that
  \begin{equation} \label{eq:wg}
a_h(u_h, v_h) = (f, v_0) \qquad \textrm{for all }v_h = \{v_0,\, v_b\}\in V_h,
  \end{equation}
where the bilinear form $a_h(\cdot,\cdot)$ on $V_h \times V_h$ is
defined by
\begin{equation}\label{eq:bilinearform}
a_h(u_h, v_h) :=   (\mathbb{A} \nabla_w u_h,\, \nabla_w v_h).
\end{equation}

The well-posedness and error estimates of the weak Galerkin formulation \eqref{eq:wg} have been discussed
in~\cite{WangYe_PrepSINUM_2011, WG-biharmonic}. To state these results, we first define a few discrete
inner-products and norms. For any $v_h=\{v_0, v_b\}$ and
$\phi_h=\{\phi_0, \phi_b\}$ in $V_h$, define a discrete $L^2$ inner-product by
$$
\ldp v_h, \phi_h\rdp \triangleq \sum_{K\in \T_h}\left [(v_0, \phi_0)_K + h ( v_0-v_b, \phi_0-\phi_b)_{\partial K}\right ].
$$
It is not hard to see that $\ldp v_h, v_h\rdp = 0$ implies
$v_h\equiv 0$. Hence, the inner-product is well-defined. Notice that
the inner-product $\ldp\cdot,\cdot\rdp$ is also well-defined for any
$v\in H^1(\Omega)$, for which $v_b|_e=v|_e$ is the trace
of $v$ on the edge $e$. In this case, the inner-product
$\ldp\cdot,\cdot\rdp$ is identical to the standard $L^2$
inner-product.

Define on each $K\in \T_h$
$$
\begin{aligned}
\| v_h \|_{0,h,K}^2 &= \|v_0\|_{0,K}^2 +  h \|v_0-v_b\|_{\partial K}^2, \\
\| v_h \|_{1,h,K}^2 &= \|v_0\|_{1,K}^2 +  h^{-1} \|v_0-v_b\|_{\partial K}^2, \\
| v_h |_{1,h,K}^2 &= |v_0|_{1,K}^2 +  h^{-1} \|v_0-v_b\|_{\partial K}^2.
\end{aligned}
$$
Using the above quantities, we define the following discrete norms
and semi-norms on the discrete space $V_h$
$$
\begin{aligned}
\| v_h \|_{0,h} &:= \left(\sum_{K\in \T_h}\| v_h \|_{0,h,K}^2 \right)^{1/2},\\
\| v_h \|_{1,h} &:= \left(\sum_{K\in \T_h}\| v_h \|_{1,h,K}^2 \right)^{1/2}, \\
| v_h |_{1,h} &:= \left(\sum_{K\in \T_h}| v_h |_{1,h,K}^2 \right)^{1/2}.
\end{aligned}
$$
It is clear that $\| v_h \|_{0,h}^2 = \ldp v_h, v_h\rdp$.
Moreover, we point out that the above norms and semi-norms are also well-defined for functions in $H^1(\Omega)$.
In this case they are identical to the usual $L^2$-norm, $H^1$-norm, and $H^1$-seminorm, respectively.

With the aid of the above defined norms, we state an additional
estimate of the $L^2$ projection $Q_h$, which was proved in
\cite{WG-biharmonic}.
\begin{lemma} \label{lem:Qh}
 For any $v\in H^{m}(\Omega)$ with $\frac{1}{2} < m \le j+1$, we have
  \begin{equation} \label{eq:a5}
    \|v-Q_h v\|_{0,h} \lesssim h^m \|v\|_{m}.
  \end{equation}
\end{lemma}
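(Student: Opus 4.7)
The plan is to unpack the definition of $\|\cdot\|_{0,h}$ directly, split $v-Q_hv$ into its two components, and bound each with standard local polynomial approximation together with a scaled trace inequality. The hypothesis $m>\tfrac12$ is used only to guarantee that the trace of $v$ on edges/faces is well defined.

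First, I would write out
\[
\|v-Q_hv\|_{0,h}^2 \;=\; \sum_{K\in\T_h}\Big[\|v-Q_0v\|_{0,K}^2 \;+\; h\,\|(v-Q_0v) - (v-Q_bv)\|_{\partial K}^2\Big].
\]
Since $v|_{\partial K}$ cancels in the jump, the boundary term reduces to $h\,\|Q_bv-Q_0v\|_{\partial K}^2$. For the volume piece, the standard $L^2$ projection estimate on a shape-regular simplex gives, for $0\le m\le j+1$,
\[
\|v-Q_0v\|_{0,K} \;\lesssim\; h^m\,\|v\|_{m,K},
\]
which after squaring and summing over $K$ yields a contribution $\lesssim h^{2m}\|v\|_m^2$.

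For the boundary piece I would insert $\pm v$ and use the triangle inequality,
\[
\|Q_bv-Q_0v\|_{\partial K} \;\le\; \|v-Q_bv\|_{\partial K} + \|v-Q_0v\|_{\partial K}.
\]
The first summand is controlled edge-by-edge by the one-dimensional (or $(d-1)$-dimensional) $L^2$ projection estimate $\|v-Q_bv\|_e \lesssim h^{m-1/2}\|v\|_{m,\omega_e}$, valid for $m>\tfrac12$ on a shape-regular patch $\omega_e$ through a standard Bramble--Hilbert / reference element argument. The second summand is handled by the scaled trace inequality
\[
\|w\|_{\partial K}^2 \;\lesssim\; h^{-1}\|w\|_{0,K}^2 + h\,|w|_{1,K}^2
\]
applied to $w=v-Q_0v$, followed by the $H^1$-stable $L^2$-projection estimate $|v-Q_0v|_{1,K}\lesssim h^{m-1}\|v\|_{m,K}$; combining gives $\|v-Q_0v\|_{\partial K}\lesssim h^{m-1/2}\|v\|_{m,K}$. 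Multiplying the squared sum by $h$ converts the $h^{2m-1}$ bound into $h^{2m}$, and summing over $K$ together with the finite overlap of the patches $\omega_e$ produces $\lesssim h^{2m}\|v\|_m^2$.

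Adding the two contributions and taking the square root finishes the proof. The only delicate step is the fractional case $\tfrac12<m<1$: here the polynomial approximation estimates must be invoked in their fractional Sobolev form (scaled from the reference element via the $K$-method of interpolation), which is standard but needs to be cited rather than produced by a Taylor argument. Everything else is routine scaling on a shape-regular simplex.
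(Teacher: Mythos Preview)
The paper does not prove this lemma in-line; it simply cites \cite{WG-biharmonic} for the result. Your argument is the standard one and is correct. One minor simplification is available: since $(Q_0 v)|_e\in P_j(e)\subseteq P_l(e)$, the edge projection $Q_b$ reproduces it, so on each face $Q_bv-Q_0v=Q_b(v-Q_0v)$ and therefore $\|Q_bv-Q_0v\|_{\partial K}\le\|v-Q_0v\|_{\partial K}$ directly---there is no need to insert $\pm v$ and bound $\|v-Q_bv\|_e$ separately via a patch estimate. Everything then reduces to the single trace bound $\|v-Q_0v\|_{\partial K}\lesssim h^{m-1/2}\|v\|_{m,K}$, handled exactly as you describe (the $H^1$ trace inequality~\eqref{trace} for $m\ge1$, its fractional analogue for $\tfrac12<m<1$).
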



The following three Lemmas have also been proved in~\cite{WG-biharmonic}.
First, we have the equivalence between $\|\nabla_w(\cdot)\|$ and the $|\cdot|_{1,h}$ semi-norm:
\medskip
\begin{lemma} \label{lem:discretenorm-equivalence}
  For any $v_h=\{v_0, v_b\}\in V_h$, we have
  \begin{equation} \label{eq:discreteh1semi}
  |v_h|_{1,h} \lesssim \|\nabla_w v_h\| \lesssim | v_h|_{1,h}.
  \end{equation}
\end{lemma}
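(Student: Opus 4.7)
The plan rests on the identity obtained by integrating the definition \eqref{dwg} of the discrete weak gradient by parts inside each $K\in\T_h$: for every $\vq\in G_j(K)$,
\begin{equation*}
(\nabla_w v_h,\vq)_K = (\nabla v_0,\vq)_K + \langle v_b - v_0,\vq\cdot\bn\rangle_{\partial K}.
\end{equation*}
Since $v_0\in P_j(K)$ forces $\nabla v_0\in [P_{j-1}(K)]^d\subseteq G_j(K)$, this identity describes $\nabla_w v_h - \nabla v_0$ as the $G_j(K)$-Riesz representer of the linear functional $\vq\mapsto\langle v_b-v_0,\vq\cdot\bn\rangle_{\partial K}$, which is the quantitative content I will exploit.

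For the upper bound $\|\nabla_w v_h\|\lesssim |v_h|_{1,h}$, I would set $\vq=\nabla_w v_h|_K$ in the identity and bound the two terms by the Cauchy--Schwarz inequality and the polynomial inverse trace inequality $\|\vq\cdot\bn\|_{\partial K}\lesssim h^{-1/2}\|\vq\|_K$, which holds on shape-regular simplices for $\vq\in G_j(K)$. Dividing by $\|\nabla_w v_h\|_K$ gives $\|\nabla_w v_h\|_K\lesssim \|\nabla v_0\|_K + h^{-1/2}\|v_b-v_0\|_{\partial K}$, and summing over $K\in\T_h$ completes this direction.

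The lower bound $|v_h|_{1,h}\lesssim\|\nabla_w v_h\|$ is the substantive step, and I would handle the two pieces $\|\nabla v_0\|_K$ and $h^{-1/2}\|v_b-v_0\|_{\partial K}$ separately by plugging in two purpose-built test functions. First, using unisolvence of the interior degrees of freedom of the paired mixed element, I pick $\tilde\vq\in G_j(K)$ with $\tilde\vq\cdot\bn=0$ on $\partial K$ and interior moments prescribed so that $(\tilde\vq,\nabla v_0)_K=\|\nabla v_0\|_K^2$ and $\|\tilde\vq\|_K\lesssim\|\nabla v_0\|_K$; this is possible because for both RT$_j$ and BDM$_{j+1}$ the interior moments test against a polynomial space containing $[P_{j-1}(K)]^d$, and the norm bound follows from a reference-element scaling argument. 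The identity then reads $\|\nabla v_0\|_K^2=(\nabla_w v_h,\tilde\vq)_K\le\|\nabla_w v_h\|_K\|\tilde\vq\|_K$, so $\|\nabla v_0\|_K\lesssim\|\nabla_w v_h\|_K$. Second, I pick $\vq^\star\in G_j(K)$ with $\vq^\star\cdot\bn=v_b-v_0$ on $\partial K$, which is admissible because $v_b-v_0$ lies edgewise in $P_l(e)$, the exact range of normal traces for the paired element, and with all interior degrees of freedom set to zero; this yields $\|\vq^\star\|_K\lesssim h^{1/2}\|v_b-v_0\|_{\partial K}$ by scaling and, crucially, $(\vq^\star,\nabla v_0)_K=0$ because $\nabla v_0$ lies in the interior DOF test space. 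The identity collapses to $\|v_b-v_0\|_{\partial K}^2=(\nabla_w v_h,\vq^\star)_K$, and Cauchy--Schwarz gives $h^{-1/2}\|v_b-v_0\|_{\partial K}\lesssim\|\nabla_w v_h\|_K$.

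The hard part is this decoupling. A naive test function realizing the prescribed normal trace $v_b-v_0$ produces an unwanted cross term $(\nabla v_0,\vq^\star)_K$ that, if treated by Young's inequality alone, introduces constants that do not absorb cleanly and leave the two estimates mutually circular. The resolution, and the crux of the proof, is that the interior degrees of freedom of both RT$_j$ and BDM$_{j+1}$ are \emph{dual} to a polynomial space containing $\nabla v_0$, so annihilating them removes this cross term exactly rather than approximately. Squaring and summing the two elementwise inequalities over $K\in\T_h$ finishes the proof.
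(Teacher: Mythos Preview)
The paper does not supply its own proof of this lemma; it merely cites \cite{WG-biharmonic}. Your argument is correct and is essentially the standard one found in that reference and in the weak Galerkin literature: the integration-by-parts identity on each $K$, the inverse trace inequality for the upper bound, and a pair of designed test functions in $G_j(K)$ for the lower bound.

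One small point of precision: your justification that ``for both RT$_j$ and BDM$_{j+1}$ the interior moments test against a polynomial space containing $[P_{j-1}(K)]^d$'' depends on which unisolvent set of degrees of freedom one adopts for BDM. For the Brezzi--Fortin choice on simplices, the interior test space is $\nabla P_{j}(K)$ together with certain curl/bubble moments, and this does \emph{not} contain all of $[P_{j-1}(K)]^d$ once $j\ge 2$. What you actually use, however, is only that $\nabla v_0\in\nabla P_j(K)$ lies in the interior test space, and this is true for every standard choice of BDM degrees of freedom (Brezzi--Fortin or N\'ed\'elec-type). So the argument stands; just replace the over-strong assertion about $[P_{j-1}(K)]^d$ with the statement that $\nabla P_j(K)$ is contained in the interior test space, which is all that is needed both to build $\tilde\vq$ and to guarantee $(\vq^\star,\nabla v_0)_K=0$.
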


Moreover, the discrete semi-norms satisfy the usual inverse inequality, as
stated in the following Lemma.
\medskip
\begin{lemma} \label{lem:discreteinverse}
  For any $v_h=\{v_0, v_b\}\in V_h$, we have
  \begin{equation} \label{eq:discreteinverse}
    |v_h|_{1,h} \lesssim h^{-1} \|v_h\|_{0,h}.
  \end{equation}
Consequently, by combining \eqref{eq:discreteh1semi} and \eqref{eq:discreteinverse}, we have
\begin{equation}
  \|\nabla_w v_h\| \lesssim h^{-1} \| v_h\|_{0,h}.
\end{equation}
\end{lemma}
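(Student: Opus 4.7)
The plan is to bound $|v_h|_{1,h,K}^2$ on each element $K\in\T_h$ by $h^{-2}\|v_h\|_{0,h,K}^2$, and then sum over the mesh. Looking at the two terms defining $|v_h|_{1,h,K}^2$, namely $|v_0|_{1,K}^2$ and $h^{-1}\|v_0-v_b\|_{\partial K}^2$, I would handle them separately and then recombine.

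First, since $v_0|_K\in P_j(K_0)$ is a polynomial on a shape-regular simplex, I would invoke the classical polynomial inverse inequality on $K$ to obtain $|v_0|_{1,K}\lesssim h^{-1}\|v_0\|_{0,K}$. This is standard and uses only shape-regularity of $\T_h$ together with a scaling argument from a reference element. Second, the boundary term $h^{-1}\|v_0-v_b\|_{\partial K}^2$ already appears in $\|v_h\|_{0,h,K}^2$ with the factor $h$ instead of $h^{-1}$, so trivially
\begin{equation*}
h^{-1}\|v_0-v_b\|_{\partial K}^2 = h^{-2}\cdot h\|v_0-v_b\|_{\partial K}^2.
\end{equation*}
Combining the two estimates gives $|v_h|_{1,h,K}^2\lesssim h^{-2}\|v_h\|_{0,h,K}^2$, and summing over $K\in\T_h$ yields \eqref{eq:discreteinverse}.

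The displayed consequence $\|\nabla_w v_h\|\lesssim h^{-1}\|v_h\|_{0,h}$ then follows immediately by chaining the just-proved bound with the norm equivalence $\|\nabla_w v_h\|\lesssim|v_h|_{1,h}$ from Lemma \ref{lem:discretenorm-equivalence}.

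I do not expect any real obstacle here; the only point requiring mild care is that the polynomial inverse inequality must be applied to $v_0$ as a polynomial on $K_0$ (which is geometrically identical to $K$), with the implicit constant depending only on $j$ and the shape-regularity parameter of $\T_h$, hence independent of $h$. The quasi-uniformity assumption on $\T_h$ ensures the local mesh size is comparable to the global $h$, which is what makes the uniform factor $h^{-1}$ legitimate in the final bound.
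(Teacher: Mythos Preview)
Your proof is correct and follows the standard route: apply the classical polynomial inverse inequality to the interior term and observe that the boundary term rescales trivially by $h^{-2}$. The paper itself does not supply a proof of this lemma in-text but cites \cite{WG-biharmonic}; your argument is precisely the expected one and requires no further justification.
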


Next, the discrete semi-norm $\|\nabla_w(\cdot)\|$, which is equivalent to $|\cdot|_{1,h}$ as shown in
Lemma \ref{lem:discretenorm-equivalence}, satisfies a Poincar\'{e}-type inequality.
\medskip
\begin{lemma} \label{lem:discretepoincare}
The Poincar\'{e}-type inequality holds true for functions in
$V_{h}$. In other words, we have the following estimate:
\begin{equation}
\| v_h \|_{0,h} \lesssim \|\nabla_w v_h\|\qquad \textrm{for all } \ v_h\in V_{h}.\label{eq:discretepoincare1}
\end{equation}
\end{lemma}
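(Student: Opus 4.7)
The plan is to prove the inequality by a duality (Aubin--Nitsche type) argument together with a trace inequality. In view of Lemma~\ref{lem:discretenorm-equivalence}, it suffices to prove $\|v_h\|_{0,h}\lesssim |v_h|_{1,h}$. Since $h$ is bounded above by the diameter of $\Omega$, we trivially have $h\|v_0-v_b\|_{\partial K}^2\lesssim h^{-1}\|v_0-v_b\|_{\partial K}^2$, so the jump contribution to $\|v_h\|_{0,h}^2$ is already controlled by $|v_h|_{1,h}^2$. The real work is therefore to bound the interior piece $\|v_0\|$ in terms of $|v_h|_{1,h}$.

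To that end, I would introduce the auxiliary elliptic problem: find $w\in H_0^1(\Omega)$ with $-\nabla\!\cdot\!(\mathbb{A}\nabla w)=v_0$ in $\Omega$. By the regularity assumption stated earlier, $\|w\|_{1+s}\lesssim \|v_0\|$ for some $s>0$ (and for the sketch, one may assume $s=1$, or regularize $\mathbb A$ mildly if needed). Testing the equation against $v_0$ and integrating by parts element-wise yields
\begin{equation*}
\|v_0\|^2 \;=\; \sum_{K\in\T_h}\Bigl[(\mathbb{A}\nabla w,\nabla v_0)_K \;-\; (v_0,\mathbb{A}\nabla w\cdot\bn)_{\partial K}\Bigr].
\end{equation*}
The key observation is that $\mathbb{A}\nabla w\in H(\mathrm{div},\Omega)$ has a single-valued normal trace across interior edges/faces, and $v_b$ vanishes on $\partial\Omega$; hence $\sum_K(v_b,\mathbb{A}\nabla w\cdot\bn)_{\partial K}=0$, and we may replace $v_0$ by $v_0-v_b$ in the boundary sum without changing its value.

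Then I would estimate the two pieces. The volume term is controlled by Cauchy--Schwarz as $|v_h|_{1,h}\,\|w\|_1$. For the boundary term, I would insert the weight $h^{\pm 1/2}$ and apply the standard trace inequality $\|\mathbb{A}\nabla w\|_{\partial K}^2\lesssim h^{-1}\|\nabla w\|_K^2 + h\,|w|_{2,K}^2$, producing a bound of the form $|v_h|_{1,h}\bigl(\|w\|_1+h\,|w|_2\bigr)$. Combining with $\|w\|_2\lesssim\|v_0\|$ and canceling one factor of $\|v_0\|$ gives $\|v_0\|\lesssim |v_h|_{1,h}$, which together with the jump estimate above yields the desired Poincaré inequality.

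The main technical obstacle is the step that asserts $\sum_K(v_b,\mathbb{A}\nabla w\cdot\bn)_{\partial K}=0$: this requires the normal trace of $\mathbb{A}\nabla w$ to be well-defined and continuous across interior edges, which is automatic when $w\in H^2$ but needs a bit more care under the weaker $H^{1+s}$ regularity with $s\le 1/2$. If only limited regularity is available, I would regularize $\mathbb A$ (or the dual data) so that $w$ is smooth enough for the normal traces to be unambiguous, and then pass to the limit. The remaining steps are routine applications of Cauchy--Schwarz, the trace inequality, and the norm equivalence in Lemma~\ref{lem:discretenorm-equivalence}.
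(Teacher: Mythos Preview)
The paper does not supply its own proof of this lemma; it merely cites \cite{WG-biharmonic}, so there is no in-paper argument to compare against. Your duality strategy is standard and the outline is correct: the reduction to $\|v_0\|\lesssim |v_h|_{1,h}$ via Lemma~\ref{lem:discretenorm-equivalence}, the element-wise integration by parts, and the insertion of $v_b$ through single-valuedness of the normal flux are the right moves. The genuine gap is exactly where you flag it, but your proposed remedy does not close it. Both the formation of the edge pairings $(v_b,\mathbb{A}\nabla w\cdot\bn)_e$ as honest $L^2$ integrals and the trace bound $h\|\mathbb{A}\nabla w\|_{\partial K}^2\lesssim \|\nabla w\|_K^2 + h^2|w|_{2,K}^2$ require $w\in H^2$ locally. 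Regularizing $\mathbb{A}$ (or the data $v_0$) does not secure this: even for $\mathbb{A}=I$, a polygonal or polyhedral $\Omega$ with a re-entrant corner only yields $w\in H^{1+s}$ with $s<1$, so the estimate $\|w\|_2\lesssim\|v_0\|$ is simply false in general and no limiting procedure recovers it. The discrete Poincar\'e inequality is a property of the mesh and the space $V_h$ alone and must hold without any regularity hypothesis on the continuous problem; your argument as written ties it to such a hypothesis.

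A clean repair that keeps your structure intact: do not pose the dual problem on $\Omega$. Extend $v_0$ by zero to a ball $B\supset\Omega$, solve $-\Delta w=v_0$ on $B$ with homogeneous Dirichlet data, and set $\bq=-\nabla w|_{\Omega}$. Then $\bq\in [H^1(\Omega)]^d$, $\nabla\cdot\bq=v_0$ in $\Omega$, and $\|\bq\|_{1}\lesssim\|v_0\|$ by full $H^2$ regularity on the ball, so the edge integrals and the scaled trace inequality \eqref{trace} applied to $\bq$ are fully justified; the remainder of your computation goes through verbatim with $\bq$ in place of $\mathbb{A}\nabla w$. Alternatively, one can bypass duality altogether via the broken Poincar\'e--Friedrichs inequality for piecewise $H^1$ functions, bounding the interelement jumps of $v_0$ through the single-valued $v_b$; that route needs no auxiliary PDE at all.
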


Following the above lemmas and \eqref{matrix},
it is clear that equation \eqref{eq:wg} admits a unique solution.
This, together with error estimates for the weak Galerkin method, has been proved in~\cite{WangYe_PrepSINUM_2011}.

\medskip
\begin{theorem} \label{thm:apriori}
Assume Problem \eqref{eq:weakformulation} has $H^{1+s}$ regularity, where $0< s\le 1$.
The weak Galerkin problem \eqref{eq:wg} admits a unique solution.
Let $u\in H_0^1(\Omega)\cap H^{m+1}(\Omega)$, $0\le m\le j+1$, be the solution to \eqref{eq:weakformulation} and
$u_h = \{u_{h,0}, u_{h,b}\}$ be the solution to \eqref{eq:wg}, then we have
\begin{align}
  \|\nabla_w (Q_h u - u_h)\| &\lesssim h^m \|u\|_{m+1} , \\
  \|Q_0 u - u_{h,0}\| &\lesssim h^{m+s} \|u\|_{m+1} + h^{1+s} \|f-Q_0f\|.\label{eq:L2}
\end{align}
\end{theorem}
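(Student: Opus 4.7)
My plan proceeds in three phases: well-posedness, the energy-norm bound, and the $L^2$ bound via duality.

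Well-posedness follows by combining the lower bound in \eqref{matrix} with Lemmas \ref{lem:discretenorm-equivalence} and \ref{lem:discretepoincare}: the form $a_h(\cdot,\cdot)$ is coercive on $V_h$ because $\|\nabla_w v_h\|$ is a full norm there, and boundedness follows from the upper bound on $\mathbb{A}$.

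For the energy estimate, set $e_h = Q_h u - u_h \in V_h$. Coercivity and Lemma \ref{lem:discretenorm-equivalence} reduce matters to bounding $a_h(e_h, e_h)$. Using the scheme \eqref{eq:wg},
\begin{equation*}
a_h(e_h, e_h) = a_h(Q_h u, e_h) - (f, e_{h,0}).
\end{equation*}
By the commutativity \eqref{eq:QDDQ}, the first term equals $(\mathbb{A} \mathbb{Q}_h \nabla u, \nabla_w e_h)$. For the second term I would multiply the PDE $-\nabla\!\cdot\!(\mathbb{A}\nabla u) = f$ by $e_{h,0}$, integrate by parts element-wise, and insert a telescoping zero using that $e_{h,b}$ is single-valued on interior faces and vanishes on $\partial\Omega$, giving
\begin{equation*}
(f, e_{h,0}) = \sum_K (\mathbb{A}\nabla u, \nabla e_{h,0})_K - \sum_K \langle (\mathbb{A}\nabla u)\cdot\mathbf{n},\; e_{h,0}-e_{h,b}\rangle_{\partial K}.
\end{equation*}
Applying the weak-gradient definition \eqref{dwg} with the RT/BDM test vector $\mathbb{Q}_h(\mathbb{A}\nabla u)$ collapses the right side into $(\mathbb{Q}_h(\mathbb{A}\nabla u), \nabla_w e_h)$, yielding the clean error identity
\begin{equation*}
a_h(e_h,e_h) = \bigl(\mathbb{A}\mathbb{Q}_h\nabla u - \mathbb{Q}_h(\mathbb{A}\nabla u),\; \nabla_w e_h\bigr).
\end{equation*}
Cauchy--Schwarz plus Lemma \ref{lem:assumptions} applied to both $\nabla u$ and $\mathbb{A}\nabla u$ then delivers the desired $h^m \|u\|_{m+1}$ estimate.

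For the $L^2$ estimate I would use Aubin--Nitsche duality. Let $\phi\in H^{1+s}(\Omega)\cap H_0^1(\Omega)$ solve $-\nabla\!\cdot\!(\mathbb{A}\nabla\phi) = Q_0 u - u_{h,0}$; by the assumed regularity $\|\phi\|_{1+s}\lesssim \|Q_0 u - u_{h,0}\|$. Then
\begin{equation*}
\|Q_0 u - u_{h,0}\|^2 = \bigl(Q_0 u - u_{h,0},\; -\nabla\!\cdot\!(\mathbb{A}\nabla\phi)\bigr),
\end{equation*}
and I would apply the same element-wise integration-by-parts manipulation as in the primal case, now to the pair $(e_h, Q_h\phi)$. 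The outcome decomposes into (i) a principal duality term of the form $a_h(Q_h\phi - \phi_h, e_h)$, bounded by $h^s \|\phi\|_{1+s}$ times the already-proved energy estimate, giving $O(h^{m+s}\|u\|_{m+1})$; (ii) a consistency term that reduces, after invoking orthogonality of $Q_0$ against piecewise constants, to $(f - Q_0 f,\, Q_0\phi - \phi)$, bounded by $h^{1+s}\|f - Q_0 f\|$ via Cauchy--Schwarz and Lemma \ref{lem:Qh} with index $1+s$; and (iii) lower-order commutator terms $\mathbb{A}\mathbb{Q}_h\nabla(\cdot) - \mathbb{Q}_h(\mathbb{A}\nabla(\cdot))$ controlled as in the energy step.

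The main obstacle is item (ii). The WG right-hand side pairs $f$ only with the interior component $v_0$, so the duality expansion does not close directly; one must carefully track that the effective test against $f$ is $Q_0\phi$ rather than $\phi$, and absorb the mismatch into the extra $h^{1+s}\|f-Q_0 f\|$ factor. Keeping the commutativity \eqref{eq:QDDQ} intact while correctly handling the boundary-flux term with variable $\mathbb{A}$ is where the technical care concentrates.
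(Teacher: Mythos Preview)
The paper does not prove Theorem~\ref{thm:apriori} itself; it merely cites \cite{WangYe_PrepSINUM_2011} for both well-posedness and the error estimates. So there is no in-paper argument to compare against, only the original Wang--Ye analysis, which your outline follows in spirit.

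That said, the energy-norm step contains a genuine gap. The claimed ``clean error identity''
\[
a_h(e_h,e_h)=\bigl(\mathbb{A}\,\mathbb{Q}_h\nabla u-\mathbb{Q}_h(\mathbb{A}\nabla u),\;\nabla_w e_h\bigr)
\]
is not correct as written. When you test \eqref{dwg} with $q=\mathbb{Q}_h(\mathbb{A}\nabla u)$, the volume term $(\nabla e_{h,0},\,\mathbb{Q}_h(\mathbb{A}\nabla u))_K$ does collapse to $(\nabla e_{h,0},\,\mathbb{A}\nabla u)_K$ because $\nabla e_{h,0}\in[P_{j-1}(K)]^d\subset G_j(K)$. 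But the boundary contribution is $\langle e_{h,0}-e_{h,b},\,\mathbb{Q}_h(\mathbb{A}\nabla u)\cdot\bn\rangle_{\partial K}$, whereas your integration-by-parts expression for $(f,e_{h,0})$ carries the \emph{unprojected} flux $(\mathbb{A}\nabla u)\cdot\bn$. The elementwise $L^2$ projection $\mathbb{Q}_h$ does not commute with taking normal traces, so a consistency residual
\[
\ell(e_h)=\sum_{K\in\T_h}\bigl\langle(\mathbb{A}\nabla u-\mathbb{Q}_h(\mathbb{A}\nabla u))\cdot\bn,\;e_{h,0}-e_{h,b}\bigr\rangle_{\partial K}
\]
survives and must be bounded separately. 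This is precisely the term that drives the standard Wang--Ye proof: one controls it by Cauchy--Schwarz, the scaled trace inequality \eqref{trace}, and the approximation property of $\mathbb{Q}_h$, again producing $h^m\|u\|_{m+1}$ times $|e_h|_{1,h}$. The duality argument for \eqref{eq:L2} then inherits the analogous residual with $\phi$ in place of $u$, contributing at order $h^s\|\phi\|_{1+s}$. Your items (i)--(iii) are otherwise on target, but without $\ell(\cdot)$ the argument does not close; once you include and estimate it, the proof matches the one cited by the paper.
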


\begin{remark} \label{rem:apriori}
Theorem \ref{thm:apriori} is only stated for homogeneous Dirichlet boundary value problems.
Similar results hold for problems with non-homogeneous Dirichlet boundary or Neumann boundary conditions
\cite{WG-biharmonic, WangYe_PrepSINUM_2011}.
\end{remark}

At the end of this section, we state a scaled trace theorem. Let $K$ be an element with $e$ as an edge.  It is well known that
for any function $g\in H^1(K)$ one has
\begin{equation}\label{trace}
\|g\|_{e}^2 \lesssim h^{-1} \|g\|_K^2 + h \|\nabla
g\|_{K}^2.
\end{equation}

\section{An auxiliary space multigrid preconditioner} \label{sec:multigrid}
In this section, we construct an auxiliary space multigrid method for the weak Galerkin formulation \eqref{eq:wg}.
The auxiliary space multigrid method was introduced by J. Xu in~\cite{xu}.
Its main idea is to use an auxiliary space as a ``coarse'' space in the multigrid algorithm,
where the discrete problem in the auxiliary space can be easily solved by an existing solver.
In our construction, we will use the $H^1$ conforming piecewise linear finite element space as an auxiliary space.
The main technical difficulty is to build the connection between the weak Galerkin discrete space $V_h$ and the
$H^1$ conforming piecewise linear finite element space.

Define the auxiliary space $\cV_h \subset H_0^1(\Omega)$ to be $H^1$ the conforming piecewise linear finite element space on mesh
$\mathcal{T}_h$. The spaces $V_h$ and $\cV_h$ are equipped with inner-products $\ldp\cdot,\cdot\rdp$ and $(\cdot,\cdot)$, and induced
norms $\|\cdot\|_{0,h}$ and $\|\cdot\|$, respectively. Define linear
operators $A:\, V_h\rightarrow V_h$ and $\cA:\: \cV_h\rightarrow
\cV_h$ by
\begin{equation} \label{eq:Adefinition}
\begin{aligned}
\ldp A u,\, v \rdp &= (\mathbb{A} \nabla_w u, \, \nabla_w v)  \qquad &&\textrm{for all }v\in V_h, \\
(\cA u,\, v) &= (\mathbb{A} \nabla u,\, \nabla v) \qquad &&\textrm{for all }v\in \cV_h.
\end{aligned}
\end{equation}
By the Poincar\'{e} inequality and Lemma \ref{lem:discretepoincare},
it is clear that operators $A$ and $\cA$ are symmetric and positive definite with respect to $\ldp\cdot,\cdot\rdp$ and $(\cdot,\cdot)$, respectively.
Hence we can define the $A$-norm and $\cA$-norm on $V_h$ and $\cV_h$, respectively, by
$$
\begin{aligned}
  \|v\|_A &= \ldp A v,\, v\rdp^{1/2} = (\mathbb{A} \nabla_w v,\, \nabla_w v)^{1/2} \qquad &&\textrm{for all } v\in V_h, \\
  \|w\|_{\cA} &= ( \cA w,\, w)^{1/2} = (\mathbb{A} \nabla w,\, \nabla w)^{1/2} \qquad &&\textrm{for all } w\in \cV_h.
\end{aligned}
$$

It is well-known that the spectral radius and condition number of operator $\cA$ is $O(h^{-2})$~\cite{Bramble93}.
We have similar estimate for the operator $A$. Note that the authors of~\cite{LiXie} also give a proof
of the order of the condition number. But our proof is different from theirs and seems to be easier.

\medskip
\begin{lemma} \label{lem:spectralradius}
  The spectral radius of operator $A$, denoted by $\rho_A=\lambda_{\max}(A)$, and the condition number of operator $A$, denoted by $\kappa(A)$, are both of order $h^{-2}$.
\end{lemma}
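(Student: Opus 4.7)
My plan is to characterize both $\rho_A$ and $\lambda_{\min}(A)$ via the Rayleigh quotient
\[
\frac{\ldp A v_h, v_h \rdp}{\ldp v_h, v_h \rdp} = \frac{(\mathbb{A}\nabla_w v_h,\nabla_w v_h)}{\|v_h\|_{0,h}^2},
\]
and to bound the numerator and denominator against each other using the tools already collected for $V_h$.

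For the upper bound $\rho_A \lesssim h^{-2}$, I would first use the uniform upper bound \eqref{matrix} on $\mathbb{A}$ to reduce the numerator to $\|\nabla_w v_h\|^2$, then apply the norm equivalence from Lemma \ref{lem:discretenorm-equivalence} to get $\|\nabla_w v_h\|^2 \lesssim |v_h|_{1,h}^2$, and finally invoke the discrete inverse inequality of Lemma \ref{lem:discreteinverse} to obtain $|v_h|_{1,h}^2 \lesssim h^{-2} \|v_h\|_{0,h}^2$. Chaining these gives $\rho_A \lesssim h^{-2}$. For the lower bound $\lambda_{\min}(A) \gtrsim 1$, I would use the lower bound on $\mathbb{A}$ to pass from the numerator to $\|\nabla_w v_h\|^2$ and then apply the discrete Poincar\'e inequality of Lemma \ref{lem:discretepoincare}, which yields $\|v_h\|_{0,h}^2 \lesssim \|\nabla_w v_h\|^2$. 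Combining these two estimates immediately gives $\kappa(A) = \rho_A / \lambda_{\min}(A) \lesssim h^{-2}$.

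To show the bound is sharp, i.e.\ $\rho_A \gtrsim h^{-2}$ and $\lambda_{\min}(A) \lesssim 1$, I would exhibit explicit test functions. For the lower bound on $\rho_A$, take $v_h = \{v_0, v_b\}$ with $v_0 \equiv 0$ on every $K$ and $v_b$ equal to $1$ on a single interior edge/face $e_0$ and $0$ elsewhere. Then on the two neighboring simplices $K$ one computes $\|v_h\|_{0,h,K}^2 = h\,\|v_b\|_{e_0}^2 \approx h^{d}$ while $|v_h|_{1,h,K}^2 = h^{-1}\|v_b\|_{e_0}^2 \approx h^{d-2}$, and by Lemma \ref{lem:discretenorm-equivalence} the same scaling transfers to $\|\nabla_w v_h\|^2$, producing a Rayleigh quotient of size $h^{-2}$. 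For the upper bound on $\lambda_{\min}(A)$, I would take a fixed smooth function $\varphi\in H^1_0(\Omega)$ independent of $h$ and test with $v_h = Q_h \varphi$: by the $L^2$-stability of the projection one has $\|v_h\|_{0,h} \gtrsim 1$ (e.g.\ using Lemma \ref{lem:Qh} to control $\|\varphi - Q_h\varphi\|$), while the energy stability \eqref{eq:H1stable} gives $\|\nabla_w v_h\| \lesssim \|\nabla \varphi\| \lesssim 1$, so the Rayleigh quotient stays $O(1)$.

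The step I expect to require the most care is the sharpness part, specifically checking that the edge-supported test function really gives $\|\nabla_w v_h\|^2 \approx h^{d-2}$. The definition \eqref{dwg} of the weak gradient is global on each $K$ even for a local $v_b$, so one must use the $|\cdot|_{1,h}$--$\|\nabla_w \cdot\|$ equivalence rather than try to compute $\nabla_w v_h$ directly. The upper-bound direction is straightforward: it is essentially a repackaging of the inverse inequality with the $\mathbb{A}$-bounds, and both Lemmas \ref{lem:discretenorm-equivalence} and \ref{lem:discreteinverse} are available off-the-shelf.
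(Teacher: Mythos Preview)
Your proposal is correct and follows essentially the same route as the paper: the upper bound on $\rho_A$ via the inverse inequality and the lower bound on $\lambda_{\min}(A)$ via the discrete Poincar\'e inequality are exactly what the paper does, and for sharpness the paper likewise uses a test function of the form $\{0,v_b\}$ (observing that $|v|_{1,h}^2 = h^{-2}\|v\|_{0,h}^2$ holds for all such $v$) together with $Q_h\varphi$ for a fixed smooth $\varphi$ (the paper picks the first Dirichlet eigenfunction, but any fixed $\varphi\in H_0^1$ works just as in your argument). The only cosmetic difference is that the paper exploits the exact identity on the whole subspace $\{0,v_b\}$ rather than computing on a single-edge bump, but your version is equally valid.
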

\begin{proof}
By the definition of $A$ and Lemma \ref{lem:discreteinverse}, for all $v\in V_h$,
$$
\ldp A v,v\rdp \lesssim \|\nabla_w v\|^2 \lesssim h^{-2} \|v\|_{0,h}^2 = h^{-2} \ldp v,v\rdp.
$$
Because $A$ is symmetric and positive definite with respect to $\ldp \cdot,\cdot \rdp$, the above inequality implies that $\lambda_{\max}(A)\lesssim h^{-2}$. The discrete Poincar\'e inequality \eqref{eq:discretepoincare1} implies $\lambda_{\min}(A) \gtrsim 1$. Therefore $\kappa(A) = \lambda_{\max}(A)/\lambda_{\min}(A)\lesssim h^{-2}$.

To derive a lower bound for $\lambda_{\max}(A)$,
we first consider functions in $V_h$ with the form $v = \{0,v_b\}$. In other words, $v_0\equiv 0$.
Then, by the definition of discrete norms, Lemma \ref{lem:discretenorm-equivalence}
and the fact that $\mathbb{A}$ is uniformly positive definite, for such function $v$ we have
$$
\begin{aligned}
\ldp A v,v\rdp &\gtrsim \|\nabla_w v\|^2 \gtrsim |v|_{1,h}^2 = \sum_{K\in \mathcal{T}_h} h^{-1}\|v_b\|_{\partial K}^2 \\
&= h^{-2} \sum_{K\in \mathcal{T}_h} h\|v_b\|_{\partial K}^2= h^{-2} \|v\|_{0,h}^2 = h^{-2} \ldp v,v\rdp.
\end{aligned}
$$
Therefore, we must have $\lambda_{\max}(A)\gtrsim h^{-2}$. This implies the spectral radius $\rho_A = \lambda_{\max}(A) = O(h^{-2})$.

To get $\lambda_{\min}(A)\lesssim 1$, we chose the eigen-function $w$ of the smallest eigenvalue, $\lambda_1$,
 of $-\Delta$ with homogeneous Dirichlet boundary condition which satisfies $1=\|\nabla w\| = \sqrt{\lambda_1} \|w\|$.
It is well known that $\lambda_1 = O(1)$.
We then project $w$ to $V_h$ using the $L^2$-projection, i.e., $w_h  = Q_hw$. We estimate the norm of $w_h$ as follows: when $h$ is sufficiently small,
by the triangle inequality and Lemma \ref{lem:Qh} one has
$$
\|w_h\|\geq \|w\| - \|w-w_h\| \gtrsim \|w\| - Ch\|\nabla w\| = \|w\| - Ch \gtrsim \|w\|,
$$
where $C$ is a positive, general constant.
By the above inequality and the stability of $Q_h$ in the energy norm, c.f. \eqref{eq:H1stable}, we have
$$
\|w_h\|_A \lesssim \|\nabla w\| =\sqrt{\lambda_1} \|w\| \lesssim \|w_h\|.
$$
%
%
This completes the proof of the lemma.
\end{proof}

\smallskip
\begin{remark} \label{rem:linalgsystem}
By the triangle inequality, the trace inequality \eqref{trace} and the inverse inequality,
the norm $\|v_h\|_{0,h}$ is equivalent to $\left(\sum_{K\in \mathcal T_h}(\|v_0\|_K^2 + h\|v_b\|_{\partial K}^2)\right)^{1/2}$
in $V_h$. In practice, equation \eqref{eq:wg} can be written as a linear algebraic system by using
the canonical bases of $V_h$, i.e. Lagrange bases of $P_j(K)$ and $P_l(e)$ on each $K$ and $e$.
Using the standard scaling argument and the equivalent norm of $\|\cdot\|_{0,h}$,
it is not hard to see that for any $v_h \in V_h$, one has $\|v_h\|_{0,h}^2 \approx h^{d} \|\underline{v_h}\|_{l^2}^2$, where
$\underline{v_h}$ is the vector representation of $v_h$ under the canonical bases and $\|\cdot\|_{l^2}$ is the Euclidean norm
of vectors. Then, the stiffness matrix in the linear algebraic system resulting from \eqref{eq:wg}, i.e.,
the matrix representation of $\ldp A \cdot,\cdot\rdp$, also
has condition number of order $O(h^{-2})$. Thus it is not easy to solve equation \eqref{eq:wg} without efficient preconditioning.
\end{remark}
\smallskip

Next, we introduce the auxiliary space multigrid method for solving
equation \eqref{eq:wg}. The idea is to construct a multigrid method
using $V_h$ as the ``fine'' space and $\cV_h$ as the ``coarse''
space. Since $\cA$ is the discrete Laplacian on the conforming
piecewise linear finite element space, the ``coarse'' problem in
$\cV_h$ can be solved by many efficient, off-the-shelf solvers such
as the standard multigrid solver or a domain decomposition solver.
Denote $\cB:\: \cV_h\rightarrow \cV_h$ to be such a ``coarse''
solver. It can be either an exact solver or an approximate solver
that satisfies certain conditions, which will be given later. Next,
on the fine space, we need a ``smoother'' $R:\: V_h\rightarrow V_h$,
which is symmetric and positive definite. For example, $R$ can be a
Jacobi or symmetric Gauss-Seidel smoother. Finally, to connect the
``coarse'' space with the ``fine'' space, we need a ``prolongation''
operator $\Pi:\:\cV_h\rightarrow V_h$. A ``restriction'' operator
$\Pi^t:\: V_h \rightarrow \cV_h$ is consequently defined by
$$
( \Pi^t v, \,w) = \ldp v,\, \Pi w\rdp \quad\textrm{for } v\in V_h\textrm{ and } w\in \cV_h.
$$
Then, the auxiliary space multigrid preconditioner $B:\: V_h\rightarrow V_h$, following the definition in~\cite{xu, Bramble93},
is given by
\begin{align}
&\textrm{Additive} \qquad &&B = R + \Pi \cB \Pi^t,  \label{addB}\\
&\textrm{Multiplicative} \qquad && I-BA = (I-RA)(I-\Pi \cB \Pi^t)(I-RA). \label{mulB}
\end{align}

Both the additive and the multiplicative versions define symmetric multigrid solvers/preconditioners.
Readers may refer to~\cite{xu92} for the equivalence between symmetric solvers and preconditioners for symmetric problems.
Non-symmetric multiplicative
multigrid solver can similarly be defined but it cannot be used as a preconditioner. Thus we restrict our attention
to the symmetric version.

According to~\cite{xu}, the following theorem holds.
\medskip
\begin{theorem} \label{thm:abscond}
  Assume that for all $v\in V_h$, $w\in \cV_h$,
  \begin{align}
    \rho_A^{-1} \ldp v,\, v\rdp \lesssim \ldp R v, \, v\rdp &\lesssim \rho_A^{-1} \ldp v,\, v \rdp, \label{eq:mg-R}\\
    (\cA w,\, w) \lesssim (\cB \cA w,\, \cA w) &\lesssim (\cA w\, w), \label{eq:mg-cB}\\
    \|\Pi w\|_A &\lesssim \|w\|_{\cA} \quad\quad\textrm{(stability of $\Pi$)}, \label{eq:mg-Pi}
  \end{align}
and furthermore, assume that there exists a linear operator $P:\, V_h\rightarrow \cV_h$ such that
\begin{align}
  \|P v\|_{\cA} &\lesssim \|v\|_A, \quad\qquad\textrm{(stability of $P$)}\label{eq:mg-P1}\\
  \|v-\Pi P v\|_{0,h}^2 &\lesssim \rho_A^{-1} \|v\|_A^2 \qquad\textrm{(approximability)}.\label{eq:mg-P2}
\end{align}
Then the preconditioner $B$ defined in \eqref{addB} or \eqref{mulB} satisfies
$$
\kappa (BA) \lesssim O(1).
$$
\end{theorem}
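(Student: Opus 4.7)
The plan is to invoke the fictitious/auxiliary space framework of Xu and Bramble--Pasciak--Xu, realizing $B$ as an additive Schwarz preconditioner with two subspaces: the ``smoother space'' $V_h$ (with local inner product induced by $R^{-1}$) and the auxiliary space $\cV_h$ mapped into $V_h$ by $\Pi$ (with inner product induced by $\cB^{-1}$). The identity I would rely on is
\begin{equation*}
\ldp B^{-1} v, v\rdp \;=\; \inf_{\substack{v=v_1+\Pi w\\ v_1\in V_h,\,w\in\cV_h}}\!\left\{\ldp R^{-1} v_1, v_1\rdp + (\cB^{-1} w, w)\right\}.
\end{equation*}
Once both directions of $\ldp v,v\rdp_A \approx \ldp B^{-1}v,v\rdp$ are established, we immediately obtain $\kappa(BA)\lesssim 1$ for the additive case, and the multiplicative case then follows from the standard equivalence (see \cite{xu92}) between additive and symmetric multiplicative preconditioners whenever $R$ satisfies \eqref{eq:mg-R} (so that $\|I-RA\|_A\lesssim 1$, keeping the smoothing step contractive in the right sense).

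For the upper bound on $\ldp B^{-1}v,v\rdp$, the approximability hypothesis \eqref{eq:mg-P2} dictates the decomposition: take $w:=Pv$ and $v_1:=v-\Pi P v$. Then \eqref{eq:mg-R} implies that $R^{-1}$ is spectrally equivalent to $\rho_A I$, so
\begin{equation*}
\ldp R^{-1} v_1, v_1\rdp \lesssim \rho_A\,\|v_1\|_{0,h}^2 \lesssim \rho_A\cdot \rho_A^{-1}\|v\|_A^2 = \|v\|_A^2,
\end{equation*}
by \eqref{eq:mg-P2}. For the coarse contribution, the lower bound in \eqref{eq:mg-cB} says $(\cB^{-1}w,w)\lesssim(\cA w,w)=\|w\|_\cA^2$, and then the $P$-stability \eqref{eq:mg-P1} gives $\|w\|_\cA^2=\|Pv\|_\cA^2\lesssim \|v\|_A^2$. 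Summing yields $\ldp B^{-1}v,v\rdp \lesssim \|v\|_A^2$, hence $\lambda_{\min}(BA)\gtrsim 1$.

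For the lower bound on $\ldp B^{-1}v,v\rdp$, I would argue by Cauchy--Schwarz on any admissible splitting $v=v_1+\Pi w$: since $\ldp Av,v\rdp = \ldp Av,v_1\rdp + \ldp Av,\Pi w\rdp \leq \|v\|_A(\|v_1\|_A+\|\Pi w\|_A)$, we have $\|v\|_A^2 \lesssim \|v_1\|_A^2+\|\Pi w\|_A^2$. Control the first term via an inverse-type bound $\|v_1\|_A^2 \leq \rho_A\|v_1\|_{0,h}^2 \lesssim \ldp R^{-1}v_1,v_1\rdp$ (using the upper bound in \eqref{eq:mg-R}), and control the second via the $\Pi$-stability \eqref{eq:mg-Pi} together with the upper bound in \eqref{eq:mg-cB}: $\|\Pi w\|_A^2\lesssim\|w\|_\cA^2\lesssim(\cB^{-1}w,w)$. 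Taking the infimum over decompositions gives $\|v\|_A^2\lesssim \ldp B^{-1}v,v\rdp$, i.e.\ $\lambda_{\max}(BA)\lesssim 1$.

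The substantive content of this theorem is really the abstract identity for $B^{-1}$ together with the symmetric form of the Xu--Zikatanov identity in the multiplicative case; the matching of hypotheses to conclusions is then essentially bookkeeping. Consequently, I do not expect a serious obstacle in the proof itself --- the genuine work has been deferred to later sections, where the nontrivial task will be to construct the prolongation $\Pi$ and its dual $P$ between the weak Galerkin space $V_h$ and the $P_1$ conforming space $\cV_h$ and to verify \eqref{eq:mg-Pi}--\eqref{eq:mg-P2} in that concrete setting.
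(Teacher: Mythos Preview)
The paper does not actually prove Theorem~\ref{thm:abscond}; it simply states ``According to~\cite{xu}, the following theorem holds'' and then invokes the abstract result as a black box. Your proposal, by contrast, supplies the argument that the paper defers to the reference: the additive Schwarz identity for $B^{-1}$, the stable decomposition $v=(v-\Pi Pv)+\Pi(Pv)$ giving the lower spectral bound, and the Cauchy--Schwarz/boundedness argument giving the upper bound, with the multiplicative case handled via~\cite{xu92}. This is precisely the content of the auxiliary space lemma in~\cite{xu}, so your sketch is both correct and faithful to the intended source; your closing remark that the substantive work lies in verifying \eqref{eq:mg-Pi}--\eqref{eq:mg-P2} for the concrete $\Pi$ and $P$ is exactly how the paper proceeds in the remainder of Section~\ref{sec:multigrid}.
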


\begin{remark}
Theorem \ref{thm:abscond} states that $B$ is a good preconditioner for $A$ as the condition
number of $BA$ is uniformly bounded. We thus can use the preconditioned conjugate gradient (PCG) method with $B$ being an effective preconditioner for solving the linear algebraic equation system associate to $Au = f$.
According to~\cite{xu92}, Theorem \ref{thm:abscond} also implies that $I-\omega BA$,
where $0<\omega<2/\rho_{BA}$, defines an efficient iterative solver.
\end{remark}
\medskip

Now we shall construct an auxiliary space preconditioner which satisfies all conditions in Theorem \ref{thm:abscond},
namely, inequalities \eqref{eq:mg-R}-\eqref{eq:mg-P2}.
It is straight forward to pick $\cB$ that satisfies condition \eqref{eq:mg-cB}.
For example, $\cB$ can be either the direct solver, for which $\cB\sim \cA^{-1}$,
or one step of classical multigrid iteration~\cite{Bramble93} which satisfies condition \eqref{eq:mg-cB}.

The smoother $R$ is also easy to define. In view of Remark \ref{rem:linalgsystem}, a Jacobi or a symmetric Gauss-Seidel smoother~\cite{Bramble93} will satisfy condition \eqref{eq:mg-R}.
Hence it remains to construct operators $\Pi$ and $P$ that satisfy the conditions \eqref{eq:mg-Pi}-\eqref{eq:mg-P2}.

The operator $\Pi$ is actually easy to choose, and we simply define
$\Pi=Q_h = \{Q_0,\, Q_b\}$. Note when $V_h$ consists of $W_{j,j}$
elements or $W_{j,j+1}$ elements with $j\ge 1$, it is clear that for
all $w\in \cV_h$ and $K\in \mathcal{T}_h$, $(\Pi w)|_K =
\{w|_{K_0},\, w|_{\partial K}\}$ which is just the natural inclusion
of $\mathcal V_h$ into $V_h$.
The stability of $\Pi$ in the energy norm follows immediately from
\eqref{eq:H1stable} and the boundedness of the diffusion coefficient $\mathbb A$:
\smallskip
\begin{lemma}
Let $\Pi=Q_h = \{Q_0,\, Q_b\}$. Then $\Pi$ satisfies condition \eqref{eq:mg-Pi}, i.e.,
$$
\|\Pi w\|_A \lesssim \|w\|_{\cA}, \quad \text{ for all } w\in \cV_h.
$$
\end{lemma}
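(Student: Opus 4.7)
The plan is to unwind the definitions of the two energy norms and then invoke the energy stability of the $L^2$ projection $Q_h$ that was already established in Lemma \ref{lem:assumptions}, together with the uniform two-sided bounds on $\mathbb{A}$ from \eqref{matrix}.

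First I would write out $\|\Pi w\|_A^2 = (\mathbb{A}\nabla_w \Pi w,\, \nabla_w \Pi w)$ using the definition \eqref{eq:Adefinition} of the $A$-norm. Since $w \in \cV_h \subset H_0^1(\Omega)$ and $\Pi w = Q_h w$, this becomes $(\mathbb{A}\nabla_w Q_h w,\, \nabla_w Q_h w)$. The upper bound $\xi^T \mathbb{A}\xi \le \beta \xi^T\xi$ in \eqref{matrix} then yields $\|\Pi w\|_A^2 \lesssim \|\nabla_w Q_h w\|^2$.

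Next I would apply the energy stability of $Q_h$ from \eqref{eq:H1stable} (or equivalently the operator identity \eqref{eq:QDDQ} combined with the $L^2$-boundedness of $\mathbb{Q}_h$) to obtain $\|\nabla_w Q_h w\| \lesssim \|\nabla w\|$. Finally, using the lower bound $\alpha \xi^T\xi \le \xi^T\mathbb{A}\xi$, I would recover $\|\nabla w\|^2 \lesssim \|w\|_{\cA}^2$, and chain the three estimates to conclude $\|\Pi w\|_A \lesssim \|w\|_{\cA}$.

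There is essentially no obstacle here: the lemma is a direct corollary of results already collected in Section~\ref{sec:weakGalerkin}. The only point worth emphasizing in the write-up is that the quasi-uniformity hypotheses are already baked into \eqref{eq:H1stable}, and that the constants hidden in $\lesssim$ depend only on $\alpha$, $\beta$, and the shape-regularity of $\T_h$, so the bound is indeed mesh-size independent as required for condition \eqref{eq:mg-Pi}.
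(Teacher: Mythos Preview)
Your proposal is correct and matches the paper's own argument essentially verbatim: the paper simply states that the stability of $\Pi$ in the energy norm ``follows immediately from \eqref{eq:H1stable} and the boundedness of the diffusion coefficient $\mathbb{A}$,'' which is precisely the chain $\|\Pi w\|_A^2 \lesssim \|\nabla_w Q_h w\|^2 \lesssim \|\nabla w\|^2 \lesssim \|w\|_{\cA}^2$ that you spell out.
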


Next, we construct an operator $P$ that satisfies \eqref{eq:mg-P1} and \eqref{eq:mg-P2}.
\begin{definition}
  Let $0\le \alpha_1,\alpha_2,\ldots, \alpha_k\le 1$ satisfy $\sum_{i=1}^k \alpha_i = 1$, and
let $\{c_1,c_2,\ldots, c_k\}$ be a sequence of numbers.
The value $\sum_{i=1}^k \alpha_i c_i$ is called a convex combination of $\{c_1,c_2,\ldots, c_k\}$.
\end{definition}
\medskip

A function in $\cV_h$ is completely determined by its value on mesh vertices.
Let $v=\{v_0,v_b\} \in V_h$. To define $Pv$, one only needs to specify its value on all mesh vertices.
Hence we can define $P$ as follows: on each mesh vertex $\vx$,
the value of $Pv (\vx)$ is a prescribed convex combination of the values of $v_0(\vx)$ and $v_b(\vx)$ on all mesh elements and edges/faces
that have $\vx$ as a vertex.
Moreover, to preserve the homogeneous boundary condition, when $\vx\in\partial\Omega$,
the convex combination shall be constructed such that
it only depends on the value of $v_b(\vx)$ on boundary edges/faces that have $\vx$ as a vertex.
Of course, for problems with the homogeneous Dirichlet boundary condition, one can simply set $Pv(\vx) = 0$ on boundary vertices.
But the current set-up would allow easy extension to non-homogeneous boundary conditions.

\medskip
\begin{lemma}
  Operator $P$ satisfies
\begin{equation} \label{eq:Pvapproximability}
  \|v-Pv\|_{0,h}^2 + h^2 |v-Pv|_{1,h}^2 \lesssim h^2 |v|_{1,h}^2,\qquad \textrm{for all } v\in V_h.
\end{equation}
\end{lemma}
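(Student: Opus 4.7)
Since $Pv\in\cV_h\subset H^1_0(\Omega)$ is continuous across interior edges/faces, when we view $Pv$ as a weak function its interior and trace components agree on $\partial K$. Consequently $(v-Pv)_0-(v-Pv)_b=v_0-v_b$ on each $\partial K$, and the identities
\[
\|v-Pv\|_{0,h}^2 = \sum_{K}\Bigl(\|v_0-Pv\|_{K}^2 + h\|v_0-v_b\|_{\partial K}^2\Bigr),\qquad
|v-Pv|_{1,h}^2 = \sum_{K}\Bigl(|v_0-Pv|_{1,K}^2 + h^{-1}\|v_0-v_b\|_{\partial K}^2\Bigr)
\]
reduce \eqref{eq:Pvapproximability} to controlling $\sum_{K}\bigl(\|v_0-Pv\|_{K}^2+h^2|v_0-Pv|_{1,K}^2\bigr)$ by $h^2|v|_{1,h}^2$; the $h\|v_0-v_b\|_{\partial K}^2$ pieces are already accounted for on the right-hand side.

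On each $K$, I split $v_0-Pv=(v_0-I_K v_0)+(I_K v_0-Pv|_K)$, where $I_K v_0$ denotes the $P_1$ Lagrange interpolant of $v_0|_K$ at the vertices of $K$ (trivially equal to $v_0$ when $j=0$). Standard Bramble--Hilbert interpolation gives $\|v_0-I_K v_0\|_K^2+h^2|v_0-I_K v_0|_{1,K}^2\lesssim h^2|v_0|_{1,K}^2$, which contributes the first piece of $h^2|v|_{1,h}^2$. The remainder $I_K v_0-Pv|_K$ is a linear polynomial whose values at the vertices $\vx_i$ of $K$ are $v_0|_K(\vx_i)-Pv(\vx_i)$, so a scaling argument yields
\[
\|I_K v_0-Pv|_K\|_K^2+h^2|I_K v_0-Pv|_K|_{1,K}^2 \lesssim h^{d}\sum_{\vx_i\in K}\bigl|v_0|_K(\vx_i)-Pv(\vx_i)\bigr|^2.
\]

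The main work is the pointwise estimate of $v_0|_K(\vx_i)-Pv(\vx_i)$. By the defining convex combination, this difference is a bounded average of terms of the form $v_0|_K(\vx_i)-v_0|_{K'}(\vx_i)$ or $v_0|_K(\vx_i)-v_b|_{e'}(\vx_i)$ with $K',e'$ sharing the vertex $\vx_i$. Shape-regularity and quasi-uniformity bound the combinatorial size of the patch $\omega(\vx_i)$ by a constant, so each such term can be telescoped along a bounded-length chain of neighbours whose consecutive entries share a $(d-1)$-face containing $\vx_i$. Across any such face $e$, I insert the trace value $v_b|_e(\vx_i)$ and apply the $L^\infty$--$L^2$ inverse inequality on polynomials over $e$, $|\phi(\vx_i)|^2\lesssim h^{-(d-1)}\|\phi\|_{L^2(e)}^2$, to obtain $|v_0|_{K_1}(\vx_i)-v_0|_{K_2}(\vx_i)|^2\lesssim h^{-(d-1)}\bigl(\|v_0-v_b\|_{e}^2\big|_{K_1}+\|v_0-v_b\|_{e}^2\big|_{K_2}\bigr)$.

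Combining these bounds and summing over vertices, the factor $h^d\cdot h^{-(d-1)}=h$ cancels exactly the way needed, giving
\[
\sum_{K}h^{d}\sum_{\vx_i\in K}|v_0|_K(\vx_i)-Pv(\vx_i)|^2 \lesssim h\sum_{K'}\|v_0-v_b\|_{\partial K'}^2,
\]
after absorbing the bounded overlap of patches. Together with the interpolation estimate, this delivers \eqref{eq:Pvapproximability}. The principal technical obstacle is the chain/patch argument: one must verify that the combinatorial length of connecting paths through the convex-combination stencil is uniformly bounded under shape-regularity, and that the boundary-vertex modification of $P$ is compatible so that boundary edges introduce no uncontrolled terms; once that is in place, the inverse inequality on faces does the rest.
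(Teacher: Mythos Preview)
Your proof is correct and follows essentially the same route as the paper: split $v_0-Pv$ via the $P_1$ nodal interpolant $I_Kv_0$, control the interpolation error by $h|v_0|_{1,K}$, reduce $I_Kv_0-Pv$ to vertex values by scaling, and bound the vertex differences by $\|v_0-v_b\|_{\partial K'}$ over the patch using the inverse inequality on faces together with the finite-overlap property. Two minor presentational differences: the paper writes the vertex-value scaling with $h^2$ rather than your dimension-correct $h^d$ (the factor $h^d\cdot h^{-(d-1)}=h$ you compute is what makes the argument work in both 2D and 3D), and the paper obtains the $|v-Pv|_{1,h}$ bound at the end by a global inverse inequality from the $\|v-Pv\|_{0,h}$ bound, whereas you carry the $H^1$-seminorm through the same splitting directly; your treatment is a bit cleaner on both counts. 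The chain/telescoping step you spell out is what the paper compresses into the phrase ``the definition of $P$, the triangle inequality, and the finite overlapping property of quasi-uniform meshes.''
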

\begin{proof}
For each $K\in \mathcal{T}_h$, denote by $V(K)$ the vertices of $K$.
For each $K\in \mathcal{T}_h$ and $v=\{v_0,v_b\} \in V_h$,
denote by $I_{h,K}v_0$ the nodal value interpolation of $v_0$ into $P_1(K)$,
i.e., $I_{h,K}v_0\in P_1(K)$ and is identical to $v_0$ on $V(K)$.
By the approximation property of nodal value interpolations,
the scaling argument, the definition of $P$, the triangle inequality,
and the finite overlapping property of quasi-uniform meshes, we have
$$
\begin{aligned}
&\|v-P v\|_{0,h}^2 \\
=& \sum_{K\in \mathcal{T}_h}  \left(|v_0-P v|_{0,K}^2 + h\|v_0-v_b\|_{0,\partial K}^2 \right) \\
\lesssim & \sum_{K\in \mathcal{T}_h} \left(|v_0-I_{h,K} v_0|_{0,K}^2  + \sum_{\vx\in V(K)} h^2 |I_{h,K} v_0 (\vx)-Pv(\vx)|^2 + h\|v_0-v_b\|_{0,\partial K}^2 \right)\\
\lesssim & \sum_{K\in \mathcal{T}_h} \left(h^2|v_0|_{1,K}^2 + \sum_{\vx\in V(K)} h^2 |v_0 (\vx)-v_b(\vx)|^2 + h\|v_0-v_b\|_{0,\partial K}^2 \right)\\
\lesssim & \sum_{K\in \mathcal{T}_h}  \left(h^2|v_0|_{1,K}^2 + h \|v_0 - v_b\|_{0,\partial K}^2\right) \\
=& h^2 |v|_{1, h}^2.
\end{aligned}
$$
Combining the above with the inverse inequality \eqref{eq:discreteinverse} completes the proof of the lemma.
\end{proof}
\medskip

\begin{lemma} \label{lem:Pstability}
The operator $P$ satisfies the properties \eqref{eq:mg-P1} and \eqref{eq:mg-P2}.
\end{lemma}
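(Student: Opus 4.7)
The proof must establish the two inequalities $\|Pv\|_{\cA} \lesssim \|v\|_A$ and $\|v-\Pi Pv\|_{0,h}^2 \lesssim \rho_A^{-1}\|v\|_A^2$. The plan is to first reduce both to statements about $|v|_{1,h}$: by the uniform ellipticity of $\mathbb{A}$ together with Lemma \ref{lem:discretenorm-equivalence} one has $\|v\|_A \approx |v|_{1,h}$, and $\|Pv\|_{\cA} \approx \|\nabla Pv\|$ since $Pv \in \cV_h \subset H_0^1$. By Lemma \ref{lem:spectralradius}, $\rho_A^{-1} \approx h^2$. Hence it suffices to show $\|\nabla Pv\| \lesssim |v|_{1,h}$ (stability) and $\|v-\Pi Pv\|_{0,h}^2 \lesssim h^2 |v|_{1,h}^2$ (approximability).

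For stability, I would estimate $\|\nabla Pv\|_{0,K}^2$ element by element. Let $Jv_0 \in P_1(K)$ denote the $P_1$ nodal interpolant of the polynomial $v_0|_K$. Split
$$\|\nabla Pv\|_{0,K}^2 \lesssim \|\nabla J v_0\|_{0,K}^2 + \|\nabla(Pv - Jv_0)\|_{0,K}^2.$$
The first term is controlled by $|v_0|_{1,K}^2$ via a reference-element argument: on $\hat K$ the bound $|\hat J \hat v|_{1,\hat K} \lesssim \|\nabla \hat v\|_{L^\infty(\hat K)} \lesssim |\hat v|_{1,\hat K}$ holds by equivalence of norms on the finite-dimensional polynomial space, and scaling preserves the estimate. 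The second term is a linear function on $K$ whose nodal values are $Pv(\vx) - v_0|_K(\vx)$, so by the inverse inequality on $P_1(K)$,
$$\|\nabla(Pv - Jv_0)\|_{0,K}^2 \lesssim h^{d-2}\sum_{\vx\in V(K)} |Pv(\vx)-v_0|_K(\vx)|^2.$$
The key step is to bound each vertex-value difference. Because $Pv(\vx)$ is a convex combination of values $v_0|_{K'}(\vx)$ and $v_b|_{e'}(\vx)$ taken over the vertex patch $\omega_\vx$, writing $Pv(\vx)-v_0|_K(\vx)$ as a sum of such differences and chaining along a path of neighboring elements/edges within $\omega_\vx$, I can telescope through terms of the form $v_0|_{K'}(\vx)-v_b|_{e'}(\vx)$ for $e'\subset\partial K'$. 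Each such term is the pointwise value at $\vx$ of a polynomial on $e'$, so by scaling $|(v_0-v_b)(\vx)|^2 \lesssim h^{-(d-1)}\|v_0-v_b\|_{0,e'}^2$. Using the $O(1)$ cardinality of $\omega_\vx$ by shape regularity, and treating the boundary case via $v_b\equiv 0$ on $\partial\Omega$, I obtain $|Pv(\vx)-v_0|_K(\vx)|^2 \lesssim h^{-(d-1)}\sum_{e\in\omega_\vx}\|v_0-v_b\|_{0,e}^2$. Summing over $\vx\in V(K)$ and then over $K$, the finite-overlap property yields $\sum_K h^{d-2}\sum_{\vx\in V(K)}|Pv(\vx)-v_0|_K(\vx)|^2 \lesssim h^{-1}\sum_e \|v_0-v_b\|_{0,e}^2 \lesssim |v|_{1,h}^2$, completing stability.

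For approximability, I use the triangle inequality
$$\|v-\Pi Pv\|_{0,h} \le \|v-Pv\|_{0,h} + \|Pv-\Pi Pv\|_{0,h}.$$
The first term is bounded by $h|v|_{1,h}$ directly from inequality \eqref{eq:Pvapproximability} proved in the previous lemma. For the second, since $Pv\in\cV_h\subset H^1(\Omega)$, Lemma \ref{lem:Qh} applied with $m=1$ gives $\|Pv-Q_h Pv\|_{0,h} \lesssim h\|Pv\|_1 \lesssim h\|\nabla Pv\|$ (using Poincar\'e since $Pv$ vanishes on $\partial\Omega$), and the stability estimate just established yields $\|\nabla Pv\| \lesssim |v|_{1,h}$. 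Combining, $\|v-\Pi Pv\|_{0,h}^2 \lesssim h^2 |v|_{1,h}^2 \lesssim \rho_A^{-1}\|v\|_A^2$.

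The main obstacle is the patch-chain step bounding $|Pv(\vx)-v_0|_K(\vx)|$ in terms of edge jumps $\|v_0-v_b\|_{0,e}$. The convex-combination definition of $P$ is essential here: it guarantees that each difference can be decomposed into a sum of local jumps with coefficients controlled by the $O(1)$ patch complexity. Handling the case $j=0$ (where $v_0$ is constant on each $K$ and $\Pi$ is no longer the natural inclusion) requires no special treatment, since the decomposition $Jv_0 = v_0$ trivializes the $|v_0|_{1,K}$ term and the approximability argument above never uses $\Pi Pv = Pv$.
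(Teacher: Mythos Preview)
Your proof is correct, but it takes a longer route than the paper for \eqref{eq:mg-P1}. The paper simply invokes the previous lemma: inequality \eqref{eq:Pvapproximability} already contains $|v-Pv|_{1,h}\lesssim |v|_{1,h}$, so stability is one line via the triangle inequality $|Pv|_{1,h}\le |v-Pv|_{1,h}+|v|_{1,h}\lesssim |v|_{1,h}$. Your from-scratch vertex-patch argument (splitting through $Jv_0$ and telescoping across the star of each vertex) is precisely the mechanism that underlies the paper's proof of \eqref{eq:Pvapproximability}, so you are re-deriving a consequence of what has already been established.

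For \eqref{eq:mg-P2}, your estimate of $\|Pv-\Pi Pv\|_{0,h}$ via Lemma~\ref{lem:Qh} plus the global Poincar\'e inequality on $Pv\in H_0^1(\Omega)$ is exactly the alternative the paper describes in the Remark immediately following Lemma~\ref{lem:Pstability}. The paper instead distinguishes $j\ge 1$ (where $\Pi$ is the inclusion and the term vanishes) from $j=0$, and in the latter case uses an element-wise average-type Poincar\'e inequality (since $\Pi Pv$ is the local $L^2$ average of $Pv$), obtaining $\|Pv-\Pi Pv\|_{0,h}\lesssim h|Pv|_{1}$ without appealing to the global boundary condition. Your route is tidier in that it handles all $j$ uniformly without a case split; the paper's route buys extensibility to non-homogeneous Dirichlet or Neumann problems, where the global Poincar\'e inequality for $Pv$ is unavailable.
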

\begin{proof}
By using inequalities \eqref{eq:Pvapproximability} and \eqref{eq:discreteh1semi}, for all $v\in V_h$, we have
$$
\|P v\|_{\cA}^2 \lesssim |Pv |_{1,h}^2
\lesssim |v-Pv|_{1,h}^2 + |v|_{1,h}^2
\lesssim |v|_{1,h}^2
\lesssim \|v\|_{A}^2.
$$
This completes the proof of Inequality \eqref{eq:mg-P1}.

We then estimate $\|Pv-\Pi P v\|_{0,h}$.
When $j\geq 1$, $\|Pv-\Pi P v\|_{0,h} = 0$ since $\Pi$ is the natural inclusion.
We only need to consider the case $j = 0$.
Since $\Pi Pv$ is the average of $Pv$, we get
$$
\|Pv - \Pi Pv\|_{K}\lesssim h|Pv|_1, \quad \|Pv - \Pi Pv\|_{\partial K}\lesssim h|Pv|_{1,\partial K},
$$
by the average type Poincar\'e inequality. By the scaled trace inequality \eqref{trace} and the fact $|Pv|_{2,K} = 0$ for a piecewise linear function, we can bound $h^{1/2}|Pv|_{1,\partial K} \lesssim |Pv|_{1,K}$. Therefore, we obtain
$$
\|Pv-\Pi P v\|_{0,h} \lesssim h|Pv|_{1}= h |Pv|_{1,h}\lesssim h|v|_{1,h}.
$$
Then,  by the triangle inequality and the coercivity of operator $A$, for all $v\in V_h$, we have
$$
\begin{aligned}
\|v-\Pi P v\|_{0,h} &\lesssim \|v-Pv\|_{0,h} + \|Pv-\Pi P v\|_{0,h}\lesssim h|v|_{1,h} \lesssim h\|v\|_A.
\end{aligned}
$$
Combining the above with the estimate $\rho_A = O(h^{-2})$ (see Lemma \ref{lem:spectralradius}), this completes the proof of Inequality \eqref{eq:mg-P2}.
\end{proof}

\begin{remark}
In the proof of Lemma \ref{lem:Pstability}, one may also use Lemma \ref{lem:Qh} and the Poincar\'e inequality to
estimate $\|Pv-\Pi P v\|_{0,h}$, i.e.,
$$
\|Pv-\Pi P v\|_{0,h} \lesssim h\|Pv\|_1 \lesssim h|Pv|_{1}.
$$
This requires the Poincar\'e inequality for $Pv$, which is not true for non-homogeneous Dirichlet boundary problems.
The current approach avoids such difficulty and can thus be easily
extended to non-homogeneous Dirichlet boundary problems or Neumann boundary problems.
\end{remark}
\smallskip

By now, all conditions in Theorem \ref{thm:abscond} have been verified for the given multigrid construction. We summarize it in the following theorem:
\begin{theorem}
Suppose we have a smoother $R$ and an auxiliary solver $\mathcal B$ satisfying the property: for all $v\in V_h$, $w\in \cV_h$,
\begin{align*}
\rho_A^{-1} \ldp v,\, v\rdp \lesssim \ldp R v, \, v\rdp &\lesssim \rho_A^{-1} \ldp v,\, v \rdp,\\
    (\cA w,\, w) \lesssim (\cB \cA w,\, \cA w) &\lesssim (\cA w\, w).
\end{align*}
Let $B = R + \Pi \cB \Pi^t$ or defined implicitly by the relation $I-BA = (I-RA)(I-\Pi \cB \Pi^t)(I-RA)$. Then $B$ is symmetric and positive definite and
$\kappa (BA) \lesssim O(1)$.
\end{theorem}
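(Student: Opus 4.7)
The proof amounts to assembling the pieces that the paper has already prepared and feeding them into the abstract framework of Theorem \ref{thm:abscond}. So my plan is simply to verify, one by one, that each of the five hypotheses \eqref{eq:mg-R}--\eqref{eq:mg-P2} required by Theorem \ref{thm:abscond} is in place under the present theorem's assumptions, and then invoke that theorem as a black box.

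First, the two spectral-equivalence assumptions \eqref{eq:mg-R} and \eqref{eq:mg-cB} on the smoother $R$ and the auxiliary-space solver $\mathcal{B}$ are given verbatim in the hypothesis of the theorem, so nothing has to be done there. Next, I would take the prolongation operator to be $\Pi := Q_h = \{Q_0,Q_b\}$, i.e., the natural $L^2$-projection from $\cV_h$ into $V_h$. The stability bound \eqref{eq:mg-Pi}, $\|\Pi w\|_A \lesssim \|w\|_{\cA}$ for all $w\in\cV_h$, has already been established as an immediate consequence of \eqref{eq:H1stable} and the uniform boundedness of $\mathbb A$ from above; I would just cite that lemma.

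Then I would take $P:V_h\to\cV_h$ to be the convex-combination operator specified in the paragraph following the convex-combination definition: at each mesh vertex $\vx$ we set $Pv(\vx)$ equal to a prescribed convex combination of the values $v_0(\vx)$ and $v_b(\vx)$ from the elements and faces containing $\vx$ (using only boundary faces when $\vx\in\partial\Omega$). The two properties \eqref{eq:mg-P1} (stability $\|Pv\|_{\cA}\lesssim\|v\|_A$) and \eqref{eq:mg-P2} (approximability $\|v-\Pi Pv\|_{0,h}^2\lesssim \rho_A^{-1}\|v\|_A^2$) are exactly the content of Lemma \ref{lem:Pstability}, which was proved via the local interpolation estimate \eqref{eq:Pvapproximability}, the norm equivalence in Lemma \ref{lem:discretenorm-equivalence}, and the spectral radius bound $\rho_A=O(h^{-2})$ from Lemma \ref{lem:spectralradius}.

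With all five hypotheses \eqref{eq:mg-R}--\eqref{eq:mg-P2} verified, Theorem \ref{thm:abscond} immediately yields $\kappa(BA)\lesssim 1$ for both the additive and multiplicative $B$; symmetry and positive definiteness of $B$ are automatic from those of $R$ and $\mathcal{B}$ together with the adjoint relationship defining $\Pi^t$. There is no genuine obstacle here since the work has been done in the preceding lemmas; the only thing to be slightly careful about is to make sure the chosen $\Pi$ and $P$ are compatible (both are defined in terms of nodal or $L^2$ projections on the same mesh), which is what allowed Lemma \ref{lem:Pstability} to control $\|v-\Pi Pv\|_{0,h}$ by combining $\|v-Pv\|_{0,h}$ with $\|Pv-\Pi Pv\|_{0,h}$ through the triangle inequality.
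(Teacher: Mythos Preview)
Your proposal is correct and matches the paper's approach exactly: the theorem is stated as a summary with no explicit proof in the paper, since the preceding lemmas (stability of $\Pi=Q_h$, and Lemma~\ref{lem:Pstability} for $P$) have already verified all the hypotheses \eqref{eq:mg-R}--\eqref{eq:mg-P2} of Theorem~\ref{thm:abscond}. Your write-up simply makes that implicit assembly explicit.
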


\medskip
\begin{remark}
The operator $P$, although its definition seems to be complex, is only needed in the theoretical analysis.
In the implementation, one only needs $\cB$, $R$ and $\Pi$. It is also well-known that
the matrix representation of the restriction operator $\Pi^t$ is just the transpose of the
matrix representation of the prolongation operator $\Pi$.
\end{remark}

\section{Reduced system and its multigrid preconditioner} \label{sec:reduced}
By using the Schur complement, the weak Galerkin problem \eqref{eq:wg} can be reduced to a system involving only the degrees of freedom
on mesh edges/faces. In this section, we present such a reduced system and construct an auxiliary space multigrid
preconditioner for the reduced system.

\subsection{Reduced system}
Let
$$
\begin{aligned}
  V_0 &= \{v\:|\: v = \{v_0,\,0\}\in V_h\}, \\
  V_b &= \{v\:|\: v = \{0,\, v_b\}\in V_h\},
\end{aligned}
$$
be two subspaces of $V_h$. Clearly one has $V_h = V_0+V_b$.
For any function  $v = \{v_0,\,v_b\}\in V_h$, it is convenient to extend the notation of $v_0$ and $v_b$ so that, without ambiguity, $v_0\in V_0$ and $v_b\in V_b$.
Functions in $V_0$ and $V_b$ will also often be referred to as $v_0$ and $v_b$, respectively.

Then Equation \eqref{eq:wg} can be rewritten into
\begin{equation} \label{eq:reduced1}
\begin{cases}
  a_h(u_0, \, v_b) + a_h(u_b, \, v_b) = 0, \quad \qquad \text{ for all } v_b\in V_b,\\
 a_h(u_0, \, v_0) + a_h(u_b, \, v_0) = (f,\, v_0)\quad \text{ for all } v_0\in V_0.
\end{cases}
\end{equation}

By choosing a basis of $V_h$, we can obtain a matrix form of \eqref{eq:reduced1}. Let $\mathbf v$ be the vector representation of a weak function $v\in V_h$ and $\mathbf M$ be the matrix representation of an operator $M$ relative to the chosen basis. We can write the matrix form of \eqref{eq:reduced1} as follows
\begin{equation}\label{eq:originalmatrix}
\begin{pmatrix}
\mathbf A_b & \mathbf A_{b0}\\
\mathbf A_{0b} & \mathbf A_{0}
\end{pmatrix}
\begin{pmatrix}
\mathbf u_b\\
\mathbf u_0
\end{pmatrix}
=
\begin{pmatrix}
0\\
\mathbf f
\end{pmatrix}.
\end{equation}
Note that $\mathbf A_0$ is block-diagonal. We can thus solve $\mathbf u_0$ from the second equation and substitute into the first equation to obtain the Schur complement equation
\begin{equation}\label{eq:Schur}
(\mathbf A_b - \mathbf A_{b0}\mathbf A_0^{-1}\mathbf A_{0b}) \mathbf u_b = - \mathbf A_0^{-1}\mathbf f.
\end{equation}
After $\mathbf u_b$ is obtained by solving \eqref{eq:Schur}, the interior part $\mathbf u_0 = \mathbf A_0^{-1}(\mathbf f - \mathbf A_{0b}\mathbf u_b)$ can be computed element-wise.

The reduced system \eqref{eq:Schur} involves less degrees of freedom than the original weak Galerkin system \eqref{eq:originalmatrix}.
Indeed, the difference between these two degrees of freedom is exactly $\dim(V_0)$, which is equal to
$(j+1)(j+2)/2$ times the total number of mesh triangles in two-dimension,
and $(j+1)(j+2)(j+3)/6$ times the total number of mesh tetrahedron in three-dimension. More importantly, the Schur complement $\mathbf A_b - \mathbf A_{b0}\mathbf A_0^{-1}\mathbf A_{0b}$ is also a SPD matrix and has the same sparsity as $\mathbf A_b$. Therefore solving the reduced system \eqref{eq:Schur} is more efficient than solving the original system \eqref{eq:originalmatrix} provided a good preconditioner for \eqref{eq:Schur} is available. In the rest of this section, we will construct a fast auxiliary multigrid preconditioner for \eqref{eq:Schur}. Note that the algorithm is implemented in the matrix formulation. The analysis, however, is given in the operator form. In the following we will introduce corresponding operators.


We first introduce an $a_h(\cdot, \cdot)$-orthogonal projector $P_0$ from $V_b$ to $V_0$ as follows: For $v_b\in V_b$, define $P_0v_b\in V_0$ such that
$$
a_h(P_0 v_b,\, \zeta_0 ) = a_h(v_b,\, \zeta_0 ) \qquad\textrm{for all }  \zeta_0\in V_0.
$$
It is not hard to see that $\|(I-P_0)v_b\|_{0,h} = \|\{-P_0v_b,v_b\}\|_{0,h}$ is a well-defined norm on $V_b$.
In the following analysis we shall always equip $V_b$ with this new norm and $V_0$ with the inherited norm $\|\cdot\|_{0,h}$.
By the trace inequality, the inverse inequality and the definition of $\|\cdot\|_{0,h}$, one has
$$
\|P_0v_b\|_{0,h} \lesssim \|P_0 v_b\| \lesssim \|\{-P_0v_b,v_b\}\|_{0,h} = \|(I-P_0)v_b\|_{0,h},
$$
which implies that $P_0:\: V_b\rightarrow V_0$ is a bounded linear operator under the newly assigned norms.
Denote by $V_0'$ and $V_b'$ the space of bounded linear functionals on $V_0$ and $V_b$, respectively.
Then the bounded linear operator $P_0$ induces a bounded dual operator $P_0': V_0'\to
V_b'$, i.e.,  for $F\in V_0'$, $\langle P_0'F, v_b \rangle  \triangleq
\langle F, P_0 v_b\rangle $ for all $v_b \in V_b$. In particular,
let $F$ be defined by $\langle F, \cdot\rangle = (f, \cdot)$ for $f\in L^2(\Omega)$,
then one has $\langle P_0'F, v_b \rangle = (f, P_0v_b)$.

We claim, and will prove later, that the operator form of the Schur complement equation \eqref{eq:Schur} is
\begin{equation} \label{eq:reduced}
\begin{aligned}
  a_h((I - P_0) u_b,\, v_b)  = -\langle P_0'F,  \, v_b\rangle, \quad \text{for all } v_b\in V_b.
\end{aligned}
\end{equation}
Note that by the property of the projection $P_0$, Equation \eqref{eq:reduced} can also be written
into the symmetric form $a_h((I - P_0) u_b,\, (I - P_0)v_b)  = -\langle P_0'F,  v_b\rangle$ for all $v_b\in V_b$.

To prove this, we first
define a linear operator $A_0^{-1}: L^2(\Omega)\rightarrow V_0$ by: for a function $g \in L^2(\Omega)$, one has $A_0^{-1} g\in V_0$ such that
$$
a_h(A_0^{-1} g, \, v_0) = (g,\, v_0)\qquad \textrm{for all }v_0\in V_0.
$$
The well-posedness of $A_0^{-1}$ follows directly from the
coercivity of $a_h(\cdot,\cdot)$ on $V_h$, and consequently on its
subspace $V_0$. Moreover, the restriction of $A_0^{-1}$ to $V_0$ is
symmetric and positive definite.
Noticing that $\nabla_w v_0$ is locally defined on each mesh element,
it is clear that $A_0^{-1}$ is also locally defined on each mesh element.

Denote by $\nabla_h\cdot$ the piecewise divergence operator on
$\Sigma_h$, and by $\mathbb Q_h:\: L^2(\Omega)^d \rightarrow \Sigma_h$ the
$L^2$ projection. Using the above notation and the definition of
$\nabla_w$ , the second equation in \eqref{eq:reduced1} implies that
\begin{equation} \label{eq:u0}
\begin{aligned}
  (\mathbb{A} \nabla_w u_0,\,\nabla_w v_0) &=
  (f,\, v_0) - (\mathbb{A} \nabla_w u_b, \, \nabla_w v_0) \\
  &= (f,\, v_0) + (\nabla_h\cdot (\mathbb Q_h\mathbb{A} \nabla_w u_b),\, v_0),
\end{aligned}
\end{equation}
which leads to
\begin{equation} \label{eq:u0-2}
u_0 = A_0^{-1} (f + \nabla_h\cdot (\mathbb Q_h\mathbb{A} \nabla_w u_b) ).
\end{equation}

\medskip
Next, we note that the projection $P_0$ is identical to $-A_0^{-1} \nabla_h\cdot (\mathbb Q_h\mathbb{A} \nabla_w)$ on $V_b$:
\begin{lemma} \label{lem:orthogonal}
The orthogonal operator $P_0:\:V_b\rightarrow V_0$
$$
P_0 v_b = -A_0^{-1} \nabla_h\cdot (\mathbb Q_h\mathbb{A} \nabla_w v_b)\qquad \textrm{for all } v_b\in V_b.
$$
\end{lemma}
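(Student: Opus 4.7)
The plan is to show directly that $w := -A_0^{-1} \nabla_h\cdot (\mathbb Q_h\mathbb{A} \nabla_w v_b)$ satisfies the defining relation of $P_0 v_b$, namely $a_h(w, \zeta_0) = a_h(v_b, \zeta_0)$ for all $\zeta_0 \in V_0$. Since both $w$ and $P_0v_b$ lie in $V_0$, the coercivity of $a_h$ on $V_0$ then forces $w = P_0 v_b$.

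First I would expand the right-hand side. By the definition of $a_h$, $a_h(v_b,\zeta_0) = (\mathbb{A}\nabla_w v_b,\nabla_w\zeta_0)$. Since $\nabla_w\zeta_0$ is piecewise in $G_j(K)$, it lies in $\Sigma_h$, and therefore the $L^2$-projection $\mathbb Q_h$ may be inserted without changing the value of the inner product:
\begin{equation*}
a_h(v_b,\zeta_0) = (\mathbb Q_h\mathbb{A}\nabla_w v_b,\nabla_w\zeta_0).
\end{equation*}

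Next, I would apply the definition \eqref{dwg} of the discrete weak gradient element-by-element with test function $\vq = (\mathbb Q_h\mathbb{A}\nabla_w v_b)|_K\in G_j(K)$ and with the weak function $\zeta_0 = \{\zeta_{0,0}, 0\}\in V_0$ (whose boundary component vanishes). This gives, on each $K\in\T_h$,
\begin{equation*}
(\nabla_w\zeta_0,\mathbb Q_h\mathbb{A}\nabla_w v_b)_K = -(\zeta_{0,0},\nabla\cdot(\mathbb Q_h\mathbb{A}\nabla_w v_b))_K,
\end{equation*}
because the boundary term $\langle 0,\vq\cdot\bn\rangle_{\partial K}$ drops out. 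Summing over all $K$ and using the piecewise divergence notation $\nabla_h\cdot$,
\begin{equation*}
a_h(v_b,\zeta_0) = -(\nabla_h\cdot(\mathbb Q_h\mathbb{A}\nabla_w v_b),\zeta_0).
\end{equation*}
The right-hand side is an $L^2$-inner product of an element of $L^2(\Omega)$ with $\zeta_0\in V_0$, which by definition of $A_0^{-1}$ equals $a_h(A_0^{-1}(-\nabla_h\cdot(\mathbb Q_h\mathbb{A}\nabla_w v_b)),\zeta_0) = a_h(w,\zeta_0)$.

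Combining these steps yields $a_h(w,\zeta_0) = a_h(v_b,\zeta_0)$ for every $\zeta_0\in V_0$, and since $w\in V_0$, the uniqueness portion of the definition of $P_0$ gives $w = P_0 v_b$. I do not anticipate a genuine obstacle: the only subtlety is realizing that $\nabla_w\zeta_0\in\Sigma_h$ so that $\mathbb Q_h$ can be introduced for free, which is exactly what makes the integration-by-parts identity \eqref{dwg} applicable with $\vq = \mathbb Q_h\mathbb{A}\nabla_w v_b$. Everything else is bookkeeping with the definitions of $A_0^{-1}$, $P_0$, and $\nabla_w$.
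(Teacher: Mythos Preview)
Your proof is correct and follows essentially the same route as the paper: both establish $(\mathbb{A}\nabla_w v_b,\nabla_w\zeta_0)=-(\nabla_h\cdot(\mathbb Q_h\mathbb{A}\nabla_w v_b),\zeta_0)$ via the definition of the weak gradient (using that $\zeta_0$ has vanishing boundary component), then invoke the definition of $A_0^{-1}$ and the uniqueness of $P_0v_b$. You simply spell out in more detail the insertion of $\mathbb Q_h$ and the element-wise application of \eqref{dwg}, which the paper compresses into a single line.
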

\begin{proof}
By the definition of weak gradient $\nabla_w$ and $A_0^{-1}$, we have
\begin{align*}
(\mathbb{A} \nabla_w v_b,\, \nabla_w \zeta_0 ) &= - (\nabla_h\cdot (\mathbb Q_h\mathbb{A} \nabla_w v_b),\, \zeta_0 ) \\
&=  -(\mathbb{A}\nabla_w A_0^{-1} \nabla_h\cdot (\mathbb Q_h\mathbb{A} \nabla_w v_b),\, \nabla_w \zeta_0 ).
\end{align*}
  By the definition of $P_0$, we then complete the proof of the lemma.
\end{proof}

\smallskip
\begin{remark}
The operator $P_0$ corresponds to the matrix $\mathbf A_0^{-1}\mathbf A_{0b}$.
\end{remark}
\smallskip

Now, by \eqref{eq:u0-2} and Lemma \ref{lem:orthogonal}, one has $u_0=A_0^{-1} f - P_0 u_b$.
Substituting this into the first equation of \eqref{eq:reduced1} gives
\begin{align*}
  a_h((I-P_0)u_b,\, v_b) &= -a_h(A_0^{-1}f, \,v_b)
  = -a_h(A_0^{-1}f, \,P_0 v_b) \\
&  = -(f, \, P_0 v_b) = -\langle P_0'F, v_b \rangle.
\end{align*}

This completes the derivation of the reduced problem \eqref{eq:reduced} from the original problem \eqref{eq:wg}.
Here we emphasize again that $P_0'F \in V_b'$ is bounded in the sense that
\begin{equation} \label{eq:boundedlf}
|\langle P_0'F, v_b \rangle| \lesssim  \|(I-P_0)v_b\|_{0,h}.
\end{equation}

We will further reformulate the reduced system \eqref{eq:reduced}.
To this end, we define a subspace of $V_h$ as $V_r = \{ v_r\,  | \,
v_r = (I-P_0)v_b = \{-P_0 v_b, v_b\}\textrm{ for all } v_b \in V_b\}$, which is just the
graph of $V_b$ under $I - P_0$.
The space $V_r$ inherits the norm $\|\cdot\|_{0,h}$ from $V_h$,
and hence $V_r$ and $V_b$ (equipped with the norm $\|(I-P_0)\cdot\|_{0,h}$)
are clearly isomorphic under the mapping $I-P_0:\: V_b\rightarrow V_r$.
Moreover, the right-hand side of Equation \eqref{eq:reduced} can be written into
$$
\begin{aligned}
-\langle P_0'F,  \, v_b\rangle &= -\langle \{0,P_0'F\},\, \{-P_0 v_b,v_b\}\rangle
= -\langle \{0,P_0'F\},\, v_r \rangle \\
&\triangleq \langle \mathcal{F}, \, v_r \rangle,
\end{aligned}
$$
where $\mathcal{F}$ is a bounded linear functional on $V_r$ according to \eqref{eq:boundedlf}.

By using Lemma \ref{lem:orthogonal} and combining the above analysis, Equation \eqref{eq:reduced} can now
be rewritten into: Find $u_r\in V_r$ such that
\begin{equation}\label{eq:reducedeq}
a_h(u_r,\, v_r) = \langle \mathcal{F}, v_r\rangle, \quad \text{for all
}v_r\in V_r.
\end{equation}
The well-posedness of \eqref{eq:reducedeq} then follows from
the continuity and coercivity of the bilinear form $a_h(\cdot,
\cdot)$ restricted to $V_r$ and the fact that $\mathcal{F}$ is a
bounded linear functional on $V_r$.

\subsection{Auxiliary space preconditioner for the reduced system}
Now we are able to consider an auxiliary space multigrid preconditioner for the reduced system \eqref{eq:reducedeq},
using again the $H^1$ conforming piecewise linear finite element space as the auxiliary space.
Denote by $A_r$ the restriction of operator $A$, defined in \eqref{eq:Adefinition}, to the subspace $V_r$.
That is, $A_r:\: V_r\rightarrow V_r$ is defined by
$$
\ldp A_r u,\, v\rdp = a_h(u,\, v)\quad\textrm{for all }v\in V_r.
$$

To apply Theorem \ref{thm:abscond}, we define a prolongation operator $\Pi_r:\: \cV_h\rightarrow V_r$ and a linear operator $P_r:\: V_r\rightarrow\cV_h$ by
$$
\Pi_r = (I-P_0)Q_b \quad\textrm{and}\quad P_r = P|_{V_r}.
$$
\begin{lemma}
Both $\Pi_r$ and $P_r$ are stable in the energy norm, i.e.,
\begin{align}
\label{Pistable}\| \Pi_r v \|_{A} \lesssim \|v\|_{\cA},\qquad\textrm{for all }v\in\cV_h\\
\label{Pstable} \|P_r v_r\|_{\cA} \lesssim \|v_r\|_A,\qquad\textrm{for all }v_r\in V_r.
 \end{align}
\end{lemma}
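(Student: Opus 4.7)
The plan is to reduce both stability estimates to facts already established: the stability of $\Pi = Q_h$ from the earlier inclusion lemma, the stability of $P$ from Lemma \ref{lem:Pstability}, and the non-expansive nature of $a_h$-orthogonal projections.

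For the stability of $P_r$, the argument is essentially trivial. Since $V_r \subset V_h$ and the energy norm $\|\cdot\|_A$ on $V_r$ is simply the restriction of the one on $V_h$, the bound \eqref{Pstable} is an immediate consequence of the stability of $P$ on all of $V_h$ established in Lemma \ref{lem:Pstability}, specifically inequality \eqref{eq:mg-P1}: for $v_r \in V_r$,
\[
\|P_r v_r\|_{\cA} = \|P v_r\|_{\cA} \lesssim \|v_r\|_A.
\]

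For the stability of $\Pi_r$, the key step is to rewrite $\Pi_r v$ using the $a_h$-orthogonal projection onto $V_0$. I would first extend $P_0$ from $V_b$ to all of $V_h$ as the $a_h$-orthogonal projection $\tilde P_0:V_h\to V_0$ defined by $a_h(\tilde P_0 w,\zeta_0) = a_h(w,\zeta_0)$ for all $\zeta_0 \in V_0$; by decomposing $w=\{w_0,0\}+\{0,w_b\}$ one sees $\tilde P_0 w = \{w_0,0\} + P_0\{0,w_b\}$. For $v \in \cV_h$, writing $\Pi v = Q_h v = \{Q_0v,0\} + \{0,Q_b v\}$, a direct calculation then shows
\[
(I-\tilde P_0)\Pi v \;=\; \{0,Q_b v\} - P_0\{0,Q_b v\} \;=\; (I-P_0) Q_b v \;=\; \Pi_r v.
\]

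Since $\tilde P_0$ is the $a_h$-orthogonal projection onto $V_0$, the operator $I-\tilde P_0$ is non-expansive in the energy norm, so $\|\Pi_r v\|_A = \|(I-\tilde P_0)\Pi v\|_A \le \|\Pi v\|_A$. Combining this with the already proved stability $\|\Pi v\|_A \lesssim \|v\|_{\cA}$ yields \eqref{Pistable}.

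I do not expect any genuine obstacles here: the only slightly non-obvious point is the identity $\Pi_r v = (I-\tilde P_0)\Pi v$, which hinges on recognizing that the element-interior piece $\{Q_0 v, 0\}$ lies in $V_0$ and is therefore annihilated by $I-\tilde P_0$. Once this bookkeeping is done, everything reduces to previously established stability results and the standard fact that orthogonal projections in a Hilbert space do not increase the norm.
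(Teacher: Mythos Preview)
Your proof is correct and follows essentially the same approach as the paper: both exploit the $a_h$-orthogonality of $(I-P_0)Q_b v$ to $V_0$ to reduce the energy bound on $\Pi_r v$ to the already-established stability of $Q_h$, and both obtain \eqref{Pstable} directly from the stability of $P$. Your packaging via the extended orthogonal projection $\tilde P_0$ and its non-expansiveness is a clean reformulation of the paper's computation, which instead writes out the identity $\|\Pi_r v\|_A^2 = a_h((I-P_0)Q_b v,\, Q_h v)$ explicitly and applies Cauchy--Schwarz.
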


\begin{proof}
The stability of $\Pi_r$ follows from the property of $P_0$ and the stability \eqref{eq:H1stable} of $Q_h$:
$$
\begin{aligned}
\| \Pi_r v \|_{A}^2 &= \|(I-P_0)Q_b v\|_A^2 = (\mathbb{A} \nabla_w (I-P_0)Q_b v,\, \nabla_w (Q_bv +Q_0 v)) \\
&\lesssim \| \Pi_r v \|_{A} \|Q_h v\|_A \lesssim  \| \Pi_r v \|_{A} \|v\|_{\cA}.
\end{aligned}
$$
The stability of $P_r$ simply follows
from that of $P$.
This completes the proof of the lemma.
\end{proof}

To verify the approximation property, we first explore the relation between $Q_hw$ and $\Pi_r w$ for $w\in \mathcal V_h$.
It turns out that $Q_hw = \Pi_r w$ for all $w\in \mathcal V_h$ when the diffusion coefficient matrix $\mathbb A$ is piecewise constant.

\begin{lemma}
When $\mathbb A$ is piecewise constant, we have for all $w\in \mathcal V_h$,
$$
Q_hw = \Pi_r w.
$$
\end{lemma}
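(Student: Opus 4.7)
The plan is to chase definitions until the identity $Q_h w = \Pi_r w$ reduces to a simple orthogonality statement, and then use the commutativity property $\nabla_w Q_h = \mathbb{Q}_h \nabla$ from \eqref{eq:QDDQ} together with the fact that $w$ piecewise linear and $\mathbb A$ piecewise constant makes $\mathbb A \nabla w$ piecewise constant (and hence an admissible test function in the weak gradient definition).

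First I would unpack $\Pi_r w = (I-P_0) Q_b w$. Regarding $Q_b w$ as the element $\{0, Q_b w\}$ of $V_b$, the projection $P_0 Q_b w$ lies in $V_0$, so $\Pi_r w = \{-P_0 Q_b w,\; Q_b w\}$. On the other hand $Q_h w = \{Q_0 w, Q_b w\}$, so the claim is equivalent to $P_0 Q_b w = -Q_0 w$. By the defining relation of $P_0$, this amounts to showing
$$
a_h(Q_b w,\, \zeta_0) = -a_h(Q_0 w,\, \zeta_0) \qquad \text{for all } \zeta_0 \in V_0,
$$
or equivalently $a_h(Q_h w,\, \zeta_0) = 0$ for all $\zeta_0 \in V_0$.

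To establish this orthogonality, I would apply the commutative identity \eqref{eq:QDDQ}: $\nabla_w Q_h w = \mathbb Q_h \nabla w$. Since $w \in \mathcal V_h$ is continuous piecewise linear, $\nabla w$ is piecewise constant, and because $G_j(K)$ contains all constant vector fields (for both the RT and BDM choices), the projection is trivial: $\mathbb Q_h \nabla w|_K = \nabla w|_K$. Combined with $\mathbb A$ piecewise constant, the vector field $\mathbb A \nabla w$ is piecewise constant, hence $(\mathbb A \nabla w)|_K \in G_j(K)$.

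The conclusion then follows element by element from the definition \eqref{dwg} of the discrete weak gradient. Writing $\zeta_0 = \{\zeta_0, 0\}$ and choosing the admissible test function $\bq = (\mathbb A \nabla w)|_K \in G_j(K)$, we get on each $K\in\T_h$
$$
(\mathbb A \nabla w,\, \nabla_w \zeta_0)_K = -(\zeta_0,\, \nabla\!\cdot\!(\mathbb A \nabla w))_K + \langle 0,\, (\mathbb A \nabla w)\!\cdot\!\bn\rangle_{\partial K} = 0,
$$
since the divergence of a constant vector vanishes and the boundary term has $v_b = 0$. Summing over $K$ gives $a_h(Q_h w, \zeta_0) = (\mathbb A \nabla w, \nabla_w \zeta_0) = 0$, which yields $P_0 Q_b w = -Q_0 w$ and hence $\Pi_r w = Q_h w$. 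The only subtlety I expect is the bookkeeping in the first step (the correct interpretation of $Q_b w$ as an element of $V_b$ with zero interior component), after which the argument is essentially a one-line integration-by-parts that exploits precisely the piecewise-constant hypothesis.
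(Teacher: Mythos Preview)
Your proof is correct and follows essentially the same route as the paper. Both arguments reduce the claim to the orthogonality $a_h(Q_h w,\zeta_0)=0$ for all $\zeta_0\in V_0$, and both establish it by combining the commutativity $\nabla_w Q_h w=\mathbb Q_h\nabla w=\nabla w$ with the fact that $\mathbb A\nabla w$ is piecewise constant, so the integration-by-parts in the weak gradient definition collapses; the only cosmetic difference is that the paper phrases the last step as ``$\Pi_r w-Q_h w\in V_0$ and is $a_h$-orthogonal to $V_0$'' whereas you phrase it as ``$-Q_0 w$ satisfies the defining equation of $P_0 Q_b w$''.
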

\begin{proof}
Recall that $\Pi_r w = (I-P_0) Q_b w$. Since $P_0$ is the orthogonal projection, we have
$$
(\mathbb{A}\nabla_w\Pi_r w ,\, \nabla_w \zeta_0) = (\mathbb{A}\nabla_w (I-P_0) Q_b w),\, \nabla_w \zeta_0) = 0\qquad\textrm{for all }\zeta_0\in V_0.
$$
On the other hand, using the relation \eqref{eq:QDDQ} and the fact that both $\nabla w$ and $\mathbb A$ are piecewise constant,
\begin{align*}
(\mathbb{A} \nabla_w Q_h w, \, \nabla_w \zeta_0)_K  = (\mathbb{A}\mathbb Q_h \nabla  w, \, \nabla_w \zeta_0)_K = (\mathbb{A}\nabla  w, \, \nabla_w \zeta_0)_K\\
= - (\nabla_h\cdot(\mathbb{A} \nabla w),\, \zeta_0)_K = 0, \qquad\textrm{for all }\zeta_0\in V_0.
\end{align*}
Therefore
\begin{equation}\label{orth}
a_h(\Pi_r w - Q_h w, \zeta_0) = 0, \quad \text{for all } \zeta_0\in V_0
\end{equation}
The fact $\Pi_r w - Q_h w \in V_0$ and the orthogonality~\eqref{orth} implies $\Pi_r w= Q_h w$.
\end{proof}

\medskip

Similar to the analysis in Section \ref{sec:multigrid}, we can establish the following results.

\begin{lemma} \label{lem:reducedspectralradius}
Suppose $\mathbb{A}$ is piecewise constant and the space $V_r$ is non-trivial, i.e., the triangulation contains at least one interior vertex.
Then the spectral radius of operator $A_r$, denoted by $\rho_{A_r}$, is of order $h^{-2}$.
\end{lemma}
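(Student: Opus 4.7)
The plan is to prove matching upper and lower bounds of order $h^{-2}$ for $\rho_{A_r}$, exploiting that the identity $\Pi_r w = Q_h w$ (valid here because $\mathbb{A}$ is piecewise constant, by the previous lemma) provides an explicit embedding of $\mathcal{V}_h$ into $V_r$.

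For the upper bound, I would observe that since $V_r$ is a subspace of $V_h$ and both $A_r$ and $A$ are symmetric positive definite with respect to the same inner product $\ldp\cdot,\cdot\rdp$, we have
$$
\rho_{A_r} = \sup_{v_r \in V_r}\frac{\ldp A v_r, v_r\rdp}{\ldp v_r, v_r\rdp}
\leq \sup_{v \in V_h}\frac{\ldp A v, v\rdp}{\ldp v, v\rdp} = \rho_A \lesssim h^{-2},
$$
where the last bound is Lemma \ref{lem:spectralradius}.

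For the lower bound, I would construct an explicit test function in $V_r$ whose Rayleigh quotient is of order $h^{-2}$. Pick an interior vertex $\mathbf{x}_0$ (which exists by hypothesis) and let $w \in \mathcal{V}_h$ be the nodal hat function at $\mathbf{x}_0$. By the preceding lemma, $v_r := Q_h w = \Pi_r w \in V_r$. Since $w$ is piecewise linear, $\nabla w$ is piecewise constant and therefore lies in $\Sigma_h$, so $\mathbb{Q}_h \nabla w = \nabla w$; combined with \eqref{eq:QDDQ} this gives $\nabla_w v_r = \nabla w$. Using the uniform positive definiteness of $\mathbb{A}$ from \eqref{matrix} and standard scaling on a shape-regular quasi-uniform mesh,
$$
\|v_r\|_A^2 = (\mathbb{A}\nabla w,\nabla w) \gtrsim \alpha\|\nabla w\|^2 \gtrsim h^{d-2}.
$$
For the denominator, I would use the fact that $\|w\|_{0,h}=\|w\|_0$ for $w \in H^1(\Omega)$ together with the $L^2$-projection estimate in Lemma \ref{lem:Qh} (with $m=1$):
$$
\|v_r\|_{0,h} \leq \|w\|_{0,h} + \|w - Q_h w\|_{0,h} \lesssim \|w\|_0 + h\|w\|_1 \lesssim h^{d/2},
$$
where the last step uses the standard size estimates $\|w\|_0 \sim h^{d/2}$ and $\|\nabla w\| \sim h^{(d-2)/2}$ for the hat function at an interior vertex. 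Dividing,
$$
\rho_{A_r} \geq \frac{\|v_r\|_A^2}{\|v_r\|_{0,h}^2} \gtrsim \frac{h^{d-2}}{h^d} = h^{-2}.
$$

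The main obstacle, and the reason the piecewise-constant hypothesis on $\mathbb{A}$ appears, is producing a concrete element of $V_r$ whose energy-to-mass ratio saturates the inverse inequality. In the proof of Lemma \ref{lem:spectralradius} this was done by taking a pure boundary function $\{0,v_b\}$, but such a function generally is not in $V_r$, since the $P_0$-component of its image under $I-P_0$ is nontrivial. The identity $\Pi_r w = Q_h w$ sidesteps this difficulty by guaranteeing that $Q_h$ applied to any coarse $H^1$-conforming function already lands in $V_r$, and the hat function then supplies the correct scaling $h^{-2}$.
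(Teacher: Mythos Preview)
Your proof is correct and follows essentially the same approach as the paper: the upper bound via $V_r\subset V_h$ and Lemma~\ref{lem:spectralradius}, and the lower bound by testing with $v_r=\Pi_r w=Q_h w$ for a hat function $w$, using $\nabla_w Q_h w=\mathbb{Q}_h\nabla w=\nabla w$. The only cosmetic difference is that the paper phrases the scaling as $\|w\|\lesssim h\|\nabla w\|$ and bounds $\|Q_h w\|_{0,h}\lesssim\|w\|$ directly, whereas you compute the $h^{d/2}$ and $h^{(d-2)/2}$ scalings explicitly and invoke Lemma~\ref{lem:Qh} via the triangle inequality; both routes are equivalent.
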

\begin{proof}
Recall that $$\rho_{A_r} = \lambda_{\max}(A_r) = \max_{v\in V_r}\frac{\ldp A_r v, v\rdp}{\ldp v, v\rdp} = \max_{v\in V_r}\frac{\ldp Av, v\rdp }{\ldp v, v\rdp}.$$
Since $V_r\subset V_h$, we immediately get $\rho_{A_r}\leq \rho_A\lesssim h^{-2}$.

To show the lower bound, we pick a hat function $w\in \mathcal V_h$. By the standard scaling argument,
\begin{equation}\label{eq:wscaling}
\|w\|\lesssim h|\nabla w|.
\end{equation}
We then chose $v = \Pi_r w \in V_r$ and estimate its norms. First
\begin{equation}\label{eq:vl2}
\|v\|_{0,h} = \|\Pi_r w\|_{0,h} = \|Q_h w\|_{0,h}\lesssim \|w\|.
\end{equation}
Second, as $\nabla w$ is piecewise constant, $\mathbb Q_h \nabla w = \nabla w$ and
\begin{equation}\label{eq:vh1}
\|\nabla w\| = \|\mathbb Q_h \nabla w\| = \|\nabla _w Q_h w\|  = \|\nabla _w \Pi_r w\| \lesssim (A_r v, v) ^{1/2}.
\end{equation}
Combining \eqref{eq:vl2}, \eqref{eq:vh1}, and \eqref{eq:wscaling}, we obtain
$$
h^{-2}\|v\|_{0,h}^2 \lesssim (A_r v, v),
$$
which implies $\rho_{A_r}\gtrsim h^{-2}$.
\end{proof}

Now we are able to derive the following approximation property:
\begin{lemma}
Under the same assumptions as in Lemma \ref{lem:reducedspectralradius}, one has
$$
\|v_r - \Pi_r P_r v_r\|_{0,h}\lesssim \rho_{A_r}^{-1/2}\|v_r\|_A \qquad \textrm{for all }v_r\in V_r.
$$
\end{lemma}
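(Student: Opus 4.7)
The plan is to reduce the statement to the approximation property \eqref{eq:mg-P2} for the pair $(\Pi, P)$ on $V_h$ that was already verified in Lemma \ref{lem:Pstability}, by exploiting the agreement $\Pi_r w = \Pi w$ for $w \in \mathcal{V}_h$ established in the preceding lemma.

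First, I would observe that for any $v_r \in V_r$, the image $P_r v_r = P v_r$ lies in $\mathcal{V}_h$ by the definition of $P$. Since $\mathbb{A}$ is piecewise constant by assumption, the preceding lemma applies with $w = P v_r$, giving
$$
\Pi_r P_r v_r \;=\; \Pi_r (P v_r) \;=\; Q_h (P v_r) \;=\; \Pi\, P v_r .
$$
Therefore $v_r - \Pi_r P_r v_r = v_r - \Pi P v_r$ as elements of $V_h$, and it suffices to bound the right-hand side in $\|\cdot\|_{0,h}$.

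Next, since $v_r \in V_r \subset V_h$ and the norms $\|\cdot\|_{0,h}$ and $\|\cdot\|_A$ on $V_r$ are literally the ones inherited from $V_h$, I can invoke \eqref{eq:mg-P2} (as verified in Lemma \ref{lem:Pstability}) directly with $v = v_r$ to get
$$
\|v_r - \Pi P v_r\|_{0,h}^2 \;\lesssim\; \rho_A^{-1}\,\|v_r\|_A^2 .
$$
Finally, Lemma \ref{lem:spectralradius} and Lemma \ref{lem:reducedspectralradius} together give $\rho_A \approx h^{-2} \approx \rho_{A_r}$, so $\rho_A^{-1/2} \lesssim \rho_{A_r}^{-1/2}$. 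Chaining the three observations yields the desired estimate.

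I do not anticipate a serious obstacle: the whole argument hinges on the identity $\Pi_r = \Pi$ on $\mathcal{V}_h$, which has just been proved, and on the fact that the $V_h$-level approximation property is directly inherited by $V_r \subset V_h$. The only point worth stating carefully is that the upper bound $\rho_A^{-1} \lesssim \rho_{A_r}^{-1}$ requires both spectral-radius estimates to be of the same order, which is precisely what Lemma \ref{lem:reducedspectralradius} was established for (and is why the hypothesis that $V_r$ be non-trivial is propagated into this lemma).
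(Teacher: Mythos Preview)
Your proof is correct and follows essentially the same route as the paper's: both rely on the identity $\Pi_r w = Q_h w = \Pi w$ from the preceding lemma to reduce the estimate to the approximation property already established on $V_h$. Your version is slightly more streamlined, citing \eqref{eq:mg-P2} directly rather than redoing its two-term split $\|v_r - Pv_r\|_{0,h} + \|Pv_r - Q_h Pv_r\|_{0,h}$ as the paper does; as a minor simplification, note that $\rho_A^{-1/2} \le \rho_{A_r}^{-1/2}$ follows immediately from $V_r \subset V_h$ via the variational characterization of $\lambda_{\max}$, so you do not actually need Lemma~\ref{lem:reducedspectralradius} for that final step.
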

\begin{proof}
By the triangular inequality and Equation \eqref{eq:Pvapproximability}, one has
$$
\begin{aligned}
\|v_r - \Pi_r P_r v_r\|_{0,h} 
  &= \|(I-P_0)v_b - \Pi_r P (I-P_0) v_b\|_{0,h} \\[2mm]
  &\lesssim \|(I-P_0)v_b - P(I-P_0)v_b\|_{0,h} + \|w - Q_h w\|_{0,h}\\
  &\lesssim h |(I-P_0)v_b|_{1,h} + h\|w\|_1,\\
  &\lesssim h |(I-P_0)v_b|_{1,h},
\end{aligned}
$$
where we conveniently denote $w = P(I-P_0)v_b \in \cV_h$ and use $\Pi_r w = Q_hw $. In the last step, we have used
$$
h \|w\|_1 \lesssim h|w|_1= h|P(I-P_0)v_b|_{1,h} \lesssim h|(I-P_0)v_b|_{1,h}.
$$

Combining the above and using Lemma \ref{lem:discretenorm-equivalence} give
$$
\|v_r - \Pi_r P_r v_r\|_{0,h} \lesssim h |(I-P_0)v_b|_{1,h} \lesssim h \|v_r\|_{A}.
$$
According to Lemma \ref{lem:reducedspectralradius}, $\rho_{A_r}=O(h^{-2})$. This completes the proof of the lemma.
\end{proof}

Finally, for variable efficient $\mathbb A$, if we denoted by $\bar{\mathbb A}$, the piecewise constant approximation of $\mathbb A$, then it is easy to show
$$
(\mathbb A \nabla_w v, \nabla_w v)
\lesssim
(\bar{\mathbb A}\nabla_w v, \nabla_w v)
\lesssim
(\mathbb A \nabla_w v, \nabla_w v), \quad \text{for all }v \in V_h.
$$
Therefore, a good preconditioner for the piecewise constant case will lead to a good preconditioner for the variable case.

We are able to claim that, the auxiliary space multigrid preconditioner for the reduced system \eqref{eq:reducedeq} again yields a preconditioner system with condition number of $O(1)$.
\begin{theorem}
Suppose we have a smoother $R$ and auxiliary solver $\mathcal B$ satisfying the property: for all $v\in V_r$, $w\in \cV_h$,
\begin{align*}
\rho_{A_r}^{-1} \ldp v,\, v\rdp \lesssim \ldp R v, \, v\rdp &\lesssim \rho_{A_r}^{-1} \ldp v,\, v \rdp,\\
    (\cA w,\, w) \lesssim (\cB \cA w,\, \cA w) &\lesssim (\cA w\, w).
\end{align*}
Let $B = R + \Pi \cB \Pi^t$ or defined implicitly by the relation $I-BA_r = (I-RA_r)(I-\Pi \cB \Pi^t)(I-RA_r)$. Then $B$ is symmetric and positive definite and
$\kappa (BA_r) \lesssim O(1)$.
\end{theorem}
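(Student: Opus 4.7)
The plan is to invoke the abstract framework of Theorem \ref{thm:abscond} directly, with the substitutions $V_h\mapsto V_r$, $A\mapsto A_r$, $\Pi\mapsto \Pi_r$, $P\mapsto P_r$, and $\rho_A\mapsto \rho_{A_r}$. Once the five hypotheses \eqref{eq:mg-R}--\eqref{eq:mg-P2} are verified in this reduced setting, the conclusion of that theorem immediately yields the symmetric positive definiteness of $B$ and the bound $\kappa(BA_r)\lesssim O(1)$. Thus the work reduces to assembling the five verifications.

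First I would observe that conditions \eqref{eq:mg-R} on the smoother $R$ and \eqref{eq:mg-cB} on the coarse solver $\mathcal{B}$ are literally the hypotheses imposed in the theorem. Next, the energy stability \eqref{eq:mg-Pi} of the prolongation coincides with \eqref{Pistable}, and the stability \eqref{eq:mg-P1} of the auxiliary projection is \eqref{Pstable}; both are already established in the preceding lemma. Finally, for the approximability \eqref{eq:mg-P2}, the preceding lemma provides $\|v_r - \Pi_r P_r v_r\|_{0,h}\lesssim h\|v_r\|_A$, which combined with the spectral radius estimate $\rho_{A_r}=O(h^{-2})$ from Lemma \ref{lem:reducedspectralradius} yields $\|v_r-\Pi_r P_r v_r\|_{0,h}^2\lesssim \rho_{A_r}^{-1}\|v_r\|_A^2$, as required.

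The main obstacle is that the three preparatory results just invoked — the identity $Q_h w = \Pi_r w$, the spectral radius estimate of Lemma \ref{lem:reducedspectralradius}, and the approximability lemma — were proved under the assumption that the diffusion coefficient $\mathbb{A}$ is piecewise constant. To lift the conclusion to a general $\mathbb{A}$ satisfying \eqref{matrix}, I would appeal to the spectral equivalence noted just before the theorem: if $\bar{\mathbb{A}}$ denotes the elementwise average of $\mathbb{A}$, then
\[
(\mathbb{A}\nabla_w v,\nabla_w v)\lesssim (\bar{\mathbb{A}}\nabla_w v,\nabla_w v)\lesssim (\mathbb{A}\nabla_w v,\nabla_w v)\quad\text{for all }v\in V_h,
\]
so the operators $A_r$ and $\bar A_r$ (built from $\mathbb A$ and $\bar{\mathbb A}$, respectively) are spectrally equivalent, and so are the corresponding preconditioned operators. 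Consequently any preconditioner that is optimal for $\bar A_r$ — which is what the piecewise constant analysis supplies — is also optimal for $A_r$, and the $\kappa(BA_r)\lesssim O(1)$ bound transfers verbatim to the variable coefficient case, completing the argument.
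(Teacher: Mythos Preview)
Your proposal is correct and follows essentially the same route as the paper: the theorem has no separate proof in the paper and is simply the summary obtained by plugging the preceding lemmas (stability of $\Pi_r$ and $P_r$, the spectral radius estimate, and the approximability lemma) into the abstract framework of Theorem~\ref{thm:abscond}, with the variable-coefficient case handled by the spectral equivalence with the piecewise-constant approximation $\bar{\mathbb A}$. Your assembly of these ingredients matches the paper's implicit argument; the only caveat---shared by the paper---is that the spectral-equivalence step really delivers optimality of the preconditioner built from $\bar{\mathbb A}$, and one tacitly identifies this with $B$.
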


\section{Numerical results} \label{sec:numerical}
In this section, we examine the effectiveness of the auxiliary space multigrid preconditioner using several numerical examples. In all numerical experiments, we use the symmetric Gauss-Seidel smoothers as $R$ and the multiplicative version of the multigrid preconditioner. It is known that the multiplicative version multigrid usually performs better than the corresponding additive version.
The simulation is implemented using the MATLAB software
package $i$FEM~\cite{Chen.L2008c}.

The matrix $A$ for the lowest order weak Galerkin discretization, i.e., $P_0-P_0$ element, is assembled and the matrix $\mathcal A$ for the auxiliary problem using $P1$ element is obtained through the triple product $\mathcal A = \Pi^tA\Pi$ where $\Pi: \mathcal V_h \to V_h$ is the simple average. By doing so, there is no need to repeat the assembling procedure to get $\mathcal A$ and the implementation is more algebraic. After that, the matrices in coarse levels are obtained by the triple product using the standard prolongation and restriction operators of linear elements on hierarchical meshes.  We use PCG with the auxiliary space multigrid preconditioner.
The stopping criteria for all iterations are reached when the relative error of the residual is less than $10^{-8}$. We report results for the original system and the reduced system, respectively.
Since the main purpose of these numerical results is to examine the efficiency of the auxiliary space preconditioner instead of
testing the accuracy of the weak Galerkin approximation, in the report we omit the approximation error part.
Because of this, there is no need to list the exact solution for each test problem.

\subsection*{Example 1} We first consider the Poisson equation defined on a circular mesh of the unit disk. The coarsest mesh is shown in Fig.~\ref{fig:2Dmesh} (a). A sequence of meshes are obtained by several uniformly regular refinements, i.e., a triangle is divided into four congruent four triangles by connecting middle points of edges, of the coarsest mesh. Results are summarized in Table \ref{example1}.

\subsection*{Example 2}
Next, we consider a variable coefficient problem with an oscillating coefficient:
  $$
  -\nabla\cdot \left( 2(2+\sin(10\pi x)\sin(10\pi y)) \nabla u\right) = f
  $$
  on $[0,1]\times[0,1]$. The coarsest mesh has size $h=1/4$ and is shown in Fig.~\ref{fig:2Dmesh} (b). Fourth order quadrature is used when assembling the stiffness matrix. Results are summarized in Table \ref{example2}.

\subsection*{Example 3}
We consider a test problem on an $L$-shaped domain obtained by
subtracting $[0,1]\times[-1,0]$ from $(-1,1)\times(-1,1)$. The Poisson equation on such a domain has $H^{3/2}$-regularity. Adaptive finite element method based on {\it a posteriori} error estimator constructed in~\cite{Chen2013} is used. A sample adaptive mesh obtained by bisection refinement is shown in Fig.~\ref{fig:2Dmesh} (c). For bisection grids, we apply coarsening algorithm developed in~\cite{Chen2010a} to obtain a hierarchy of meshes. In Table \ref{example3}, only results on some selected adaptive meshes are reported since the full list of adaptive meshes are long and the performance remains similar for all meshes.

\begin{figure}[htbp]
\label{fig:2Dmesh}
\subfigure[Initial grid of Example 1]{
\begin{minipage}[t]{0.33\linewidth}
\centering
\includegraphics*[width=3.1cm]{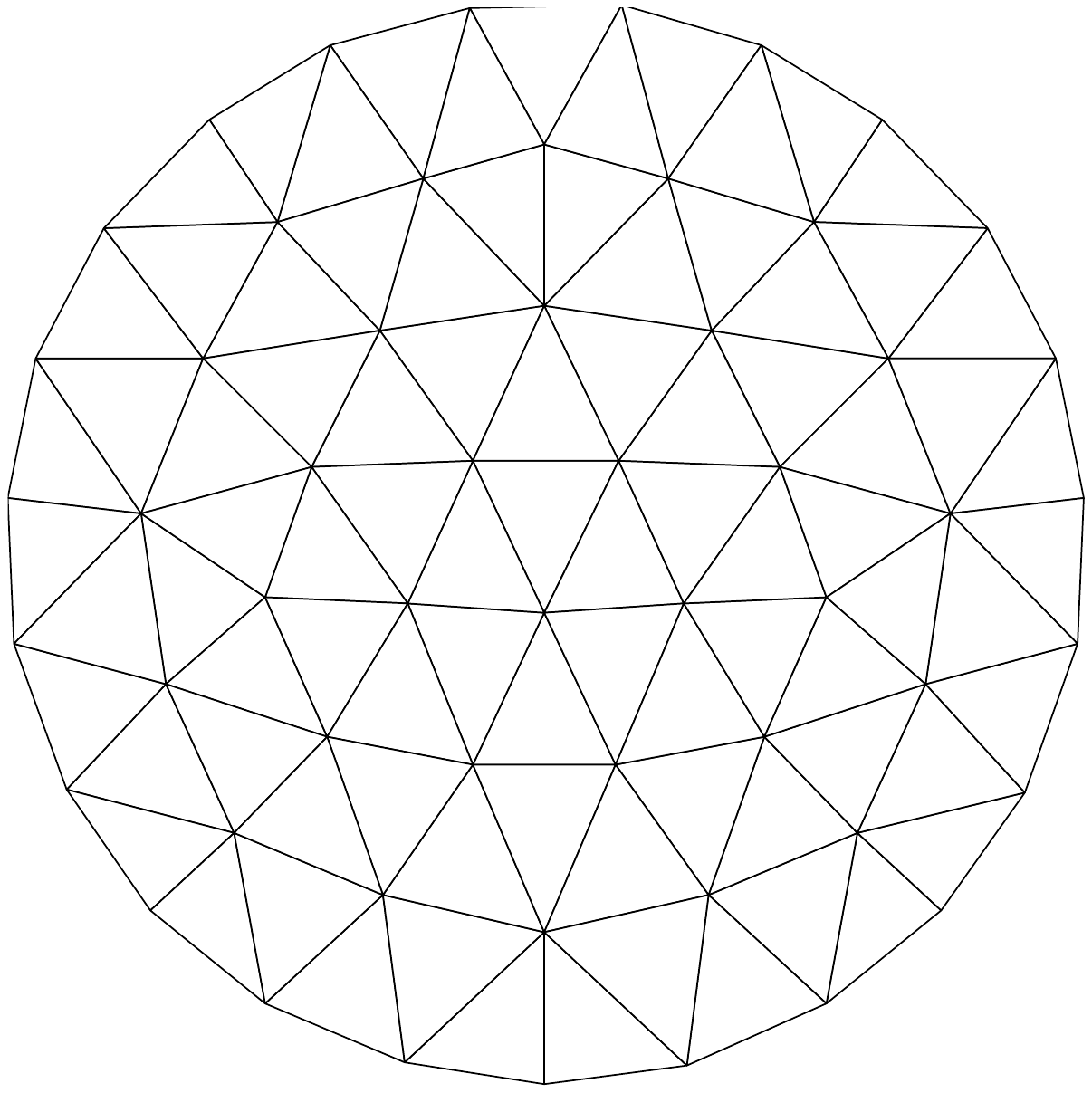}
\end{minipage}}
\subfigure[Initial grid of Example 2]
{\begin{minipage}[t]{0.32\linewidth}
\centering
\includegraphics*[width=3cm]{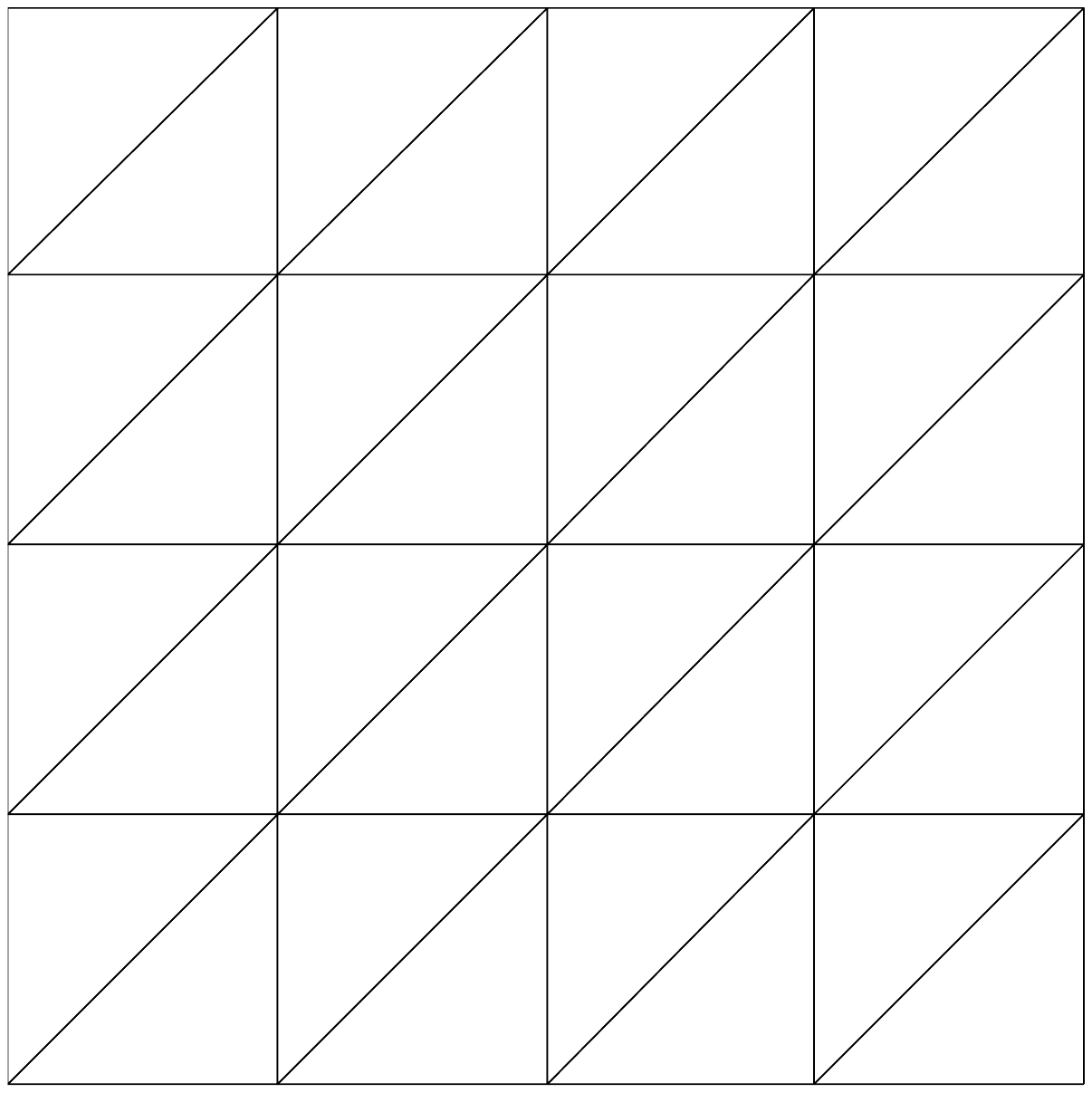}
\end{minipage}}
\subfigure[An adaptive grid of Example 3]
{\begin{minipage}[t]{0.34\linewidth}
\centering
\includegraphics*[width=3cm]{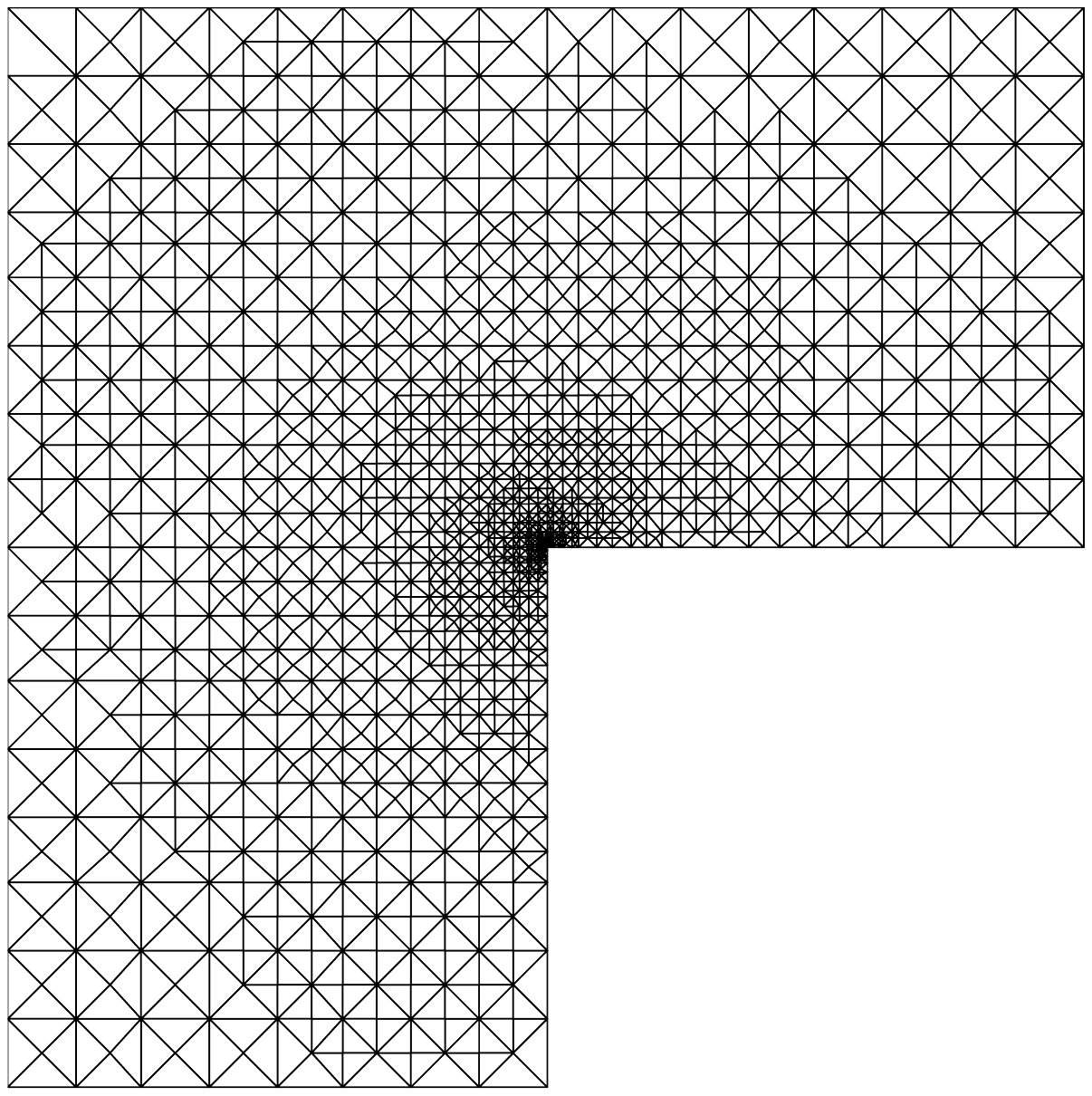}
\end{minipage}}
\caption{Meshes in Example 1, 2, 3.}
\end{figure}

\begin{table}[htdp]
\caption{Iteration steps and CPU time (in seconds) for Example 1. The left table is for the original system and the right table is for the reduced system.}
\begin{center}
\begin{tabular}{c c c }
\hline \hline
Dof & Steps & Time\\ \hline
  3446 &  13 &  0.052\\ \hline
 13692  & 13   & 0.11\\ \hline
 54584  & 13   & 0.41\\ \hline
217968  & 13  & 1.8\\ \hline
871136  & 13  &  8\\ \hline
\hline
\end{tabular}
\quad \quad
\begin{tabular}{c c c }
\hline \hline
Dof & Steps & Time\\ \hline
  2086 &  8 &  0.02 \\ \hline
  8252 &  8  & 0.053 \\ \hline
 32824  & 8   & 0.17 \\ \hline
130928 &  8  &  0.66 \\ \hline
522976 &  8   &  2.9 \\ \hline
\hline
\end{tabular}
\end{center}
\label{example1}
\end{table}

\begin{table}[htdp]
\caption{Iteration steps and CPU time (in seconds) for Example 2. The left table is for the original system and the right table is for the reduced system.}
\begin{center}
\begin{tabular}{c c c }
\hline \hline
Dof & Steps & Time\\ \hline
  1312 &  13 &  0.017 \\ \hline
  5184  & 13  & 0.048 \\ \hline
 20608  & 14  &  0.17 \\ \hline
 82176  & 14  &  0.63 \\ \hline
328192 &  14  &   2.7 \\ \hline
\hline
\end{tabular}
\quad \quad
\begin{tabular}{c c c }
\hline \hline
Dof & Steps & Time\\ \hline
   800  &  9 &  0.013 \\ \hline
  3136 &   9  & 0.036 \\ \hline
 12416  & 10 &  0.082 \\ \hline
 49408   & 9  &  0.27 \\ \hline
197120  &  9  &   1.1 \\ \hline
\hline
\end{tabular}
\end{center}
\label{example2}
\end{table}%

\begin{table}[htdp]
\caption{Iteration steps and CPU time (in seconds) for Example 3. The left table is for the original system and the right table is for the reduced system.}
\begin{center}
\begin{tabular}{c c c }
\hline \hline
Dof & Steps & Time\\ \hline
  256 &  13 &  0.0099 \\ \hline
  574  & 13  &  0.023 \\ \hline
 1091 &  13  &  0.035\\ \hline
 2177  & 13  &  0.058\\ \hline
 4398 &  13  &   0.11\\ \hline
 8642  & 13  &   0.16\\ \hline
10742 &  13 &     0.2\\ \hline
\hline
\end{tabular}
\quad \quad
\begin{tabular}{c c c }
\hline \hline
Dof & Steps & Time\\ \hline
 160  &  8 &  0.0088\\ \hline
  352  &  9 &   0.014\\ \hline
  663  & 10  &  0.023\\ \hline
 1317  & 10  &  0.036\\ \hline
 2656 &  10  &  0.067\\ \hline
 5206  &  9 &   0.091\\ \hline
 6470  &  8  &  0.094\\ \hline
\hline
\end{tabular}
\end{center}
\label{example3}
\end{table}%

\begin{figure}[htbp]
\label{fig:3Dmesh}
\subfigure[Initial grid of Example 4]{
\begin{minipage}[t]{0.45\linewidth}
\centering
\includegraphics*[width=4.2cm]{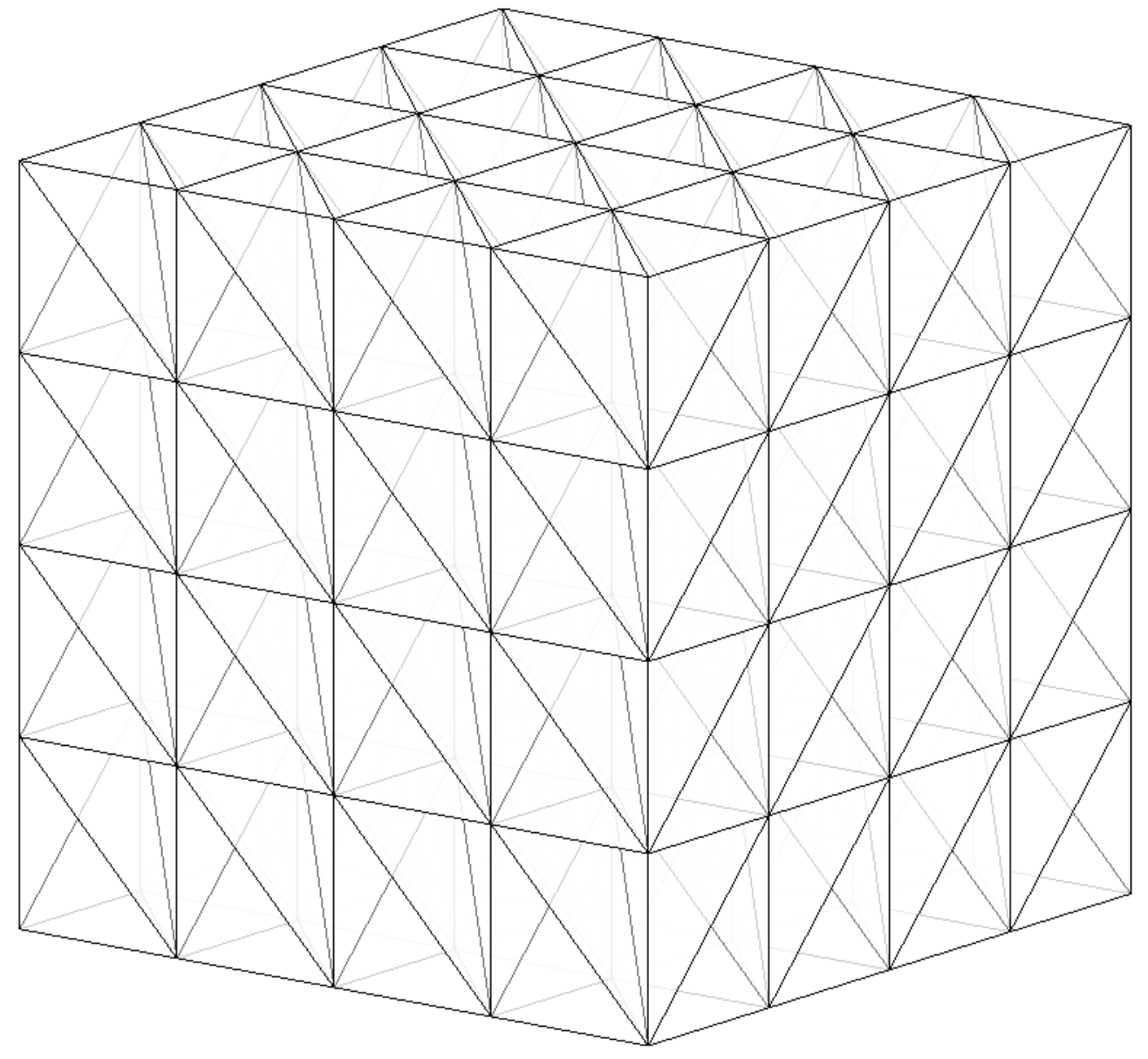}
\end{minipage}}
\subfigure[Domain of Example 5]
{\begin{minipage}[t]{0.45\linewidth}
\centering
\includegraphics*[width=4.1cm]{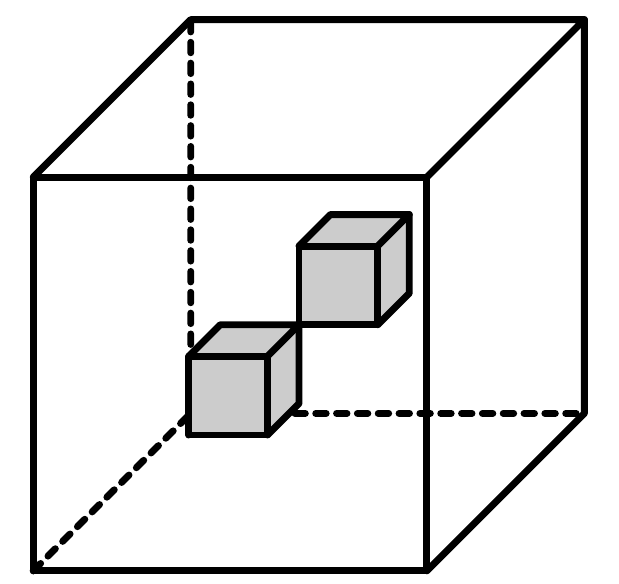}
\end{minipage}}
\caption{The coefficients $a_{1} =a_{2}=1$ in the gray domains $\Omega _1$ and $\Omega _2,$ and $a_{3} = \varepsilon$ in the rest of the domain.}
\end{figure}

\subsection*{Example 4} We consider the Poisson equation defined on the cube $\Omega = (-1,1)^3$. The coarsest mesh is shown in Fig.~\ref{fig:3Dmesh} (a). A sequence of meshes are obtained by several uniformly regular refinements, i.e., a tetrahedron is divided into $8$ small tetrahedron by connecting middle points of edges, of the coarsest mesh. Results are summarized in Table \ref{example4}.

\subsection*{Example 5} We consider the elliptic equation with jump coefficients~\cite{Xu.J1991,Xu.J;Zhu.Y2008}.
Let $\Omega = (-1,1)^3$ and the diffusion coefficient
$a(x)I$ be defined such that $a(x)$ is equal to the constants $a_1=a_2=1$ and $a_3=\varepsilon$
on the three regions $\Omega_1,\;\Omega_2$ and $\Omega_3$ respectively (see Figure~\ref{fig:3Dmesh} (b)), where
$$
  \Omega_1=(-0.5, 0)^3,\Omega_2=(0,0.5)^3
\; \hbox{ and }\; \Omega_3=\Omega\setminus (\overline{\Omega}_1\cup\overline{\Omega}_2).
$$
A sequence of meshes are obtained by several uniformly regular refinements of the coarsest mesh.

We choose $f=1$ and impose the following boundary conditions: Dirichlet conditions $$u_{\{-1\}\times[-1,1]\times[-1,1]}=0, \quad u_{\{1\}\times[-1,1]\times[-1,1]} =1,$$ and homogeneous Neumann boundary conditions on the remaining boundary.  We test the robustness of our solver as the coefficient $\epsilon$ changes. Only the reduced system is solved in this example.
Results are summarized in Table \ref{example5}.

\begin{table}[htdp]
\caption{Iteration steps and CPU time (in seconds) for Example 4. The left table is for the original system and the right table is for the reduced system.}
\begin{center}
\begin{tabular}{c c c }
\hline \hline
Dof & Steps & Time\\ \hline
   1248 &  16 &  0.018\\ \hline
   9600  & 18   &  0.1\\ \hline
  75264  & 18   &    1\\ \hline
 595968  & 19   &   10\\ \hline
4743168  & 19  & 100\\ \hline
\hline
\end{tabular}
\quad \quad
\begin{tabular}{c c c }
\hline \hline
Dof & Steps & Time\\ \hline
    864 &  11 &  0.058\\ \hline
   6528  & 12 &  0.054\\ \hline
  50688  & 13  &  0.41\\ \hline
 399360  & 13  &     4\\ \hline
3170304  & 13 &     36\\ \hline
\hline
\end{tabular}
\end{center}
\label{example4}
\end{table}%

\begin{table}
\caption{Iteration steps for Example 5. Only results for solving the reduced system is presented.}
\begin{center}
\begin{tabular}{c c c c c c}
\hline \hline
Dof & $\epsilon = 10^{-4}$ & $\epsilon = 10^{-2}$ & $\epsilon = 1$ & $\epsilon = 10^{2}$ & $\epsilon = 10^{4}$\\ \hline
864 & 36 & 22 & 13 & 13 & 13\\ \hline
6528 & 33 & 21 & 13 & 13 & 13\\ \hline
50688 & 32 & 21 & 13 & 13 & 13\\ \hline
399360 & 34 & 21 & 13 & 13 & 13\\ \hline
3170304 & 34 & 21 & 13 & 13 & 13\\ \hline
\hline
\end{tabular}
\end{center}
\label{example5}
\end{table}

From these experiments we may draw the following conclusions:
\begin{enumerate}
\item In all examples, the auxiliary space preconditioner works well for the linear system arising from discretization of the lowest order weak Galerkin method. The fluctuation of iteration steps of the PCG method applied to systems with different sizes is small which implies the condition number of the preconditioned system is uniformly bounded.

\item The solver for the reduced system is more efficient than the original system. The size of the reduced system is around two thirds of the original one and the time for solving the reduced system is around half of the original one. This shows the efficiency gained by working on the reduced system.

\item Although our theory is developed for quasi-uniform meshes, the third example indicates that our solver works well for adaptive grids and elliptic equations with less regularity.
\end{enumerate}





\end{document}